\documentclass[11pt]{article}
\usepackage{mathrsfs}
\usepackage{amssymb,amsfonts,amsmath,color,amsthm}

\usepackage{xcolor}
\newtheorem{thm}{Theorem}[section]
\newtheorem{lemma}[thm]{Lemma}

\newtheorem{conj}[thm]{Conjecture}

\begin{document}
\title{A local spectral condition for perfect matchings in 3-graphs\thanks{Partially supported by National Key Research and Development Program of China under grant 2023YFA1010203 and National Natural
Science Foundation of China under grant 12271425}}

\author{
Huiqiu Lin\thanks{School of Mathematical Sciences, East China University Of Science And Technology, Shanghai, 200237, China.
Email: huiqiulin@126.com}~~~~~~~
Hongliang Lu\thanks{School of Mathematics and Statistics,
Xi'an Jiaotong University, Xi'an, Shaanxi, 710049, China. 
Email: luhongliang215@sina.com.}~~~~~~~
Feihong Yuan\thanks{School of Mathematics and Statistics,
Xi'an Jiaotong University, Xi'an, Shaanxi, 710049, China.  Email: fhyuan@stu.xjtu.edu.cn.}~~~~~~~
Xiaonan Zhao\thanks{School of Mathematics and Statistics,
Xi'an Jiaotong University, Xi'an, Shaanxi, 710049, China. 
Email: x.n.zhao@foxmail.com}}

\date{}

\maketitle

\date{}

\maketitle
\begin{abstract}
 Let $\gamma$ be a constant such that $0 < \gamma < 1$, and let $n$ be a sufficiently large integer. Consider a $3$-uniform hypergraph $H$ on $n$ vertices. 
In 2013, K\"{u}hn, Osthus, and Treglown, along with Khan  independently, proved that for large enough $n$ with $n\equiv 0\pmod{3}$, if $\delta_1(H)\geq\binom{2n/3}{2}$, then $H$ admits a perfect matching. 
For any vertex $v\in V(H)$, we define $N_H(v)$ as the $2$-graph with vertex set $V(H)\setminus\{v\}$ and edge set $E(N_H(v)) = \{e\subseteq V(H)\setminus\{v\}: e\cup \{v\}\in E(H)\}$. 
In this paper, we show that if $\rho(N_H(v)) > (2/3+\gamma)n$ for all $v\in V(H)$, where $\rho(N_H(v))$ denotes the spectral radius of $N_H(v)$, then $H$ has a perfect matching. This bound is asymptotically tight. 
Furthermore, for  integer $s$ satisfying 
$n\geq 3s+3$,  we establish that if 
\[ \rho(N_H(v))>\frac{1}{2}(s-1+\sqrt{(s-1)^2+4s(n-s-1)})\] holds for every $v\in V(H),$
  then $H$ admits a fractional matching of size $s+1$. Notably, this second spectral bound is tight.
\end{abstract}

\noindent\textbf{Keywords.} perfect matching; spectral radius; link graph; fractional matching

\section{Introduction}
For any positive integer $k$ and any set $S$, we write $[k]:=\{1,\ldots,k\}$ and
$
\binom{S}{k}:=\{T\subseteq S: |T|=k\}.
$
A {\it hypergraph} $H$ consists of a vertex set $V(H)$ and an edge set $E(H)\subseteq
2^{V(H)}$. A hypergraph $H$ is {\it $k$-uniform} if $E(H)\subseteq \binom{V(H)}{k}$, and a $k$-uniform hypergraph is also
called a {\it $k$-graph}. For simplicity, a 2-graph will be referred to simply as a graph. We write
$e(H):=|E(H)|$ and often identify $E(H)$ with $H$. 

A \emph{matching} in a hypergraph $H$ is a collection of pairwise disjoint
edges in $H$, and we use $\nu(H)$ to denote the maximum size of a
matching in $H$. The problem of finding a maximum matching in hypergraphs is NP-complete,
even for 3-graphs \cite{Ka72}. It is therefore natural to seek simple sufficient
conditions that guarantee large matchings or perfect matchings. One of the most extensively
studied classes of such conditions are degree conditions.
Let $H$ be a hypergraph and let $T\subseteq V(H)$.
The {\it degree}
of $T$ in $H$, denoted $d_H(T)$, is the number of edges of $H$
containing $T$. For an integer $l\ge 0$, the {\it minimum $l$-degree} of $H$ is
$
\delta_l(H):=
\min\{d_H(T): T\in \binom{V(H)}{l}\}.
$
 In particular, $\delta_0(H)=e(H)$, and $\delta_1(H)$ is often called the minimum {\it
vertex} degree of $H$.

A classical problem in extremal set
theory is to determine the maximum value of $e(H)$ when $\nu(H)$ is fixed. Erd\H{o}s
\cite{Erdos65} conjectured in 1965 that,
for positive integers $k,n,t$, if $H$ is a $k$-graph on
$n$ vertices and $\nu(H) < t$, then
\[
e(H)\leq \max \left\{\binom{kt-1}{k}, \binom{n}{k}-\binom{n-t+1}{k}\right\}.
\]
This bound is tight, as shown by the complete $k$-graph on $kt-1$ vertices and by the $k$-graph on $n$ vertices in which every
edge intersects a fixed set of $t-1$ vertices. There has been intensive
recent activity on this conjecture, see for instance \cite{AFH12,alon2012nonnegative,FLM,Fr13,Fr17,HLS}. In particular, Frankl
\cite{Fr13}
proved that if $n\geq (2t-1)k-(t-1)$ and $\nu(H)<t$, then
$
e(H)\le
\binom{n}{k}-\binom{n-t+1}{k},
$
and this was further improved by Frankl and Kupavskii \cite{FK18}.

H\`an, Person, and Schacht \cite{HPS} showed that for sufficiently large $n$ with $n\equiv 0\pmod{3}$, a 3-graph $H$ on $n$ vertices has a perfect matching if its minimum vertex degree exceeds $(5/9 + \gamma)\binom{n}{2}$. 
Later, independently, Khan \cite{Kh13} and K\"uhn, Osthus, and Treglown \cite{KOT13} obtained a tight result. They proved that $H$ contains a perfect matching when its minimum vertex degree is greater than $\binom{n - 1}{2}-\binom{2n/3}{2}$, and this bound is optimal.

For a simple graph $G$ (that is, a $2$-graph), let $A(G)=(a_{ij})$ denote the adjacency matrix of $G$, where $a_{xy}=1$ if $xy\in E(G)$ and $0$ otherwise. The eigenvalues of $G$ are the eigenvalues of $A(G)$, and the largest eigenvalue of $A(G)$ is called the {\it spectral radius} of $G$ and denoted by $\rho(G)$.

Feng, Yu, and Zhang \cite{FYZ17} determined the maximum spectral radius of an $n$-vertex 2-graph with a given matching number and characterized the extremal graphs.
 Given a 3-graph $H$ and a vertex $v\in V(H)$, the \textit{link graph} $N_H(v)$ is a simple graph on the vertex set $V(H)\setminus\{v\}$ and edge set
 \[
 \{e\subseteq V(H)-v\ |\ e\cup \{v\}\in E(H)\}.
 \]
This naturally gives rise to the question of whether lower-bound constraints on $\rho(N_H(v))$ for all vertices $v$ can ensure that the 3-graph $H$ has a large matching or even a perfect matching.

In this paper, we tackle this question and demonstrate that a local spectral condition for each link graph suffices to guarantee the existence of (fractional) matchings in a 3-graph. Our main findings are presented in the following three results.

\begin{thm}\label{thm1.1}
    Let $n$ be a sufficiently large integer with $n\equiv 0\pmod 3$  and let $0<\gamma\ll 1$. Let $H$ be a $3$-graph with vertex set $[n]$. If for every vertex $v\in V(H)$,
    \[
    \rho (N_H(v))>(2/3+\gamma)\,n,
    \]
    then $H$ contains a perfect matching.
\end{thm}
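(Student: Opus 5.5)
We follow the absorbing method, as in the minimum vertex degree results of Khan and of K\"uhn, Osthus, and Treglown. The spectral hypothesis enters through an almost perfect matching lemma driven by fractional matchings, together with a structural dichotomy: either $H$ is far from the space barrier, or $H$ is close to it.

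First we translate the spectral condition into usable degree information. For a graph $G$ on $n-1$ vertices, $\rho(G)\le \max_{uv\in E(G)}\sqrt{d(u)d(v)}$ and $\rho(G)\le \sqrt{2e(G)}$. Hence $\rho(N_H(v))>(2/3+\gamma)n$ gives $e(N_H(v))>\frac12(2/3+\gamma)^2n^2$, so $\deg_H(v)\ge (2/9+\gamma')n^2$. This is much weaker than $\binom{2n/3}{2}$, so degree alone is not enough. We also get more structure: $N_H(v)$ contains a subgraph of minimum degree at least $(2/3+\gamma)n$. Indeed, repeatedly deleting vertices of degree below $\rho$ never lowers the spectral radius below $\rho$, so a nonempty core survives in which the link of $v$ is dense. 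The core has at least $(2/3+\gamma)n$ vertices, and each of its vertices has link-degree at least $(2/3+\gamma)n$.

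Second, we prove an absorbing lemma. We want a small matching $M_0$, of size at most $\gamma^{3} n$, such that for every $W\subseteq V(H)\setminus V(M_0)$ with $|W|\le\gamma^{6}n$ and $3\mid |W|$, the set $V(M_0)\cup W$ spans a perfect matching. The standard route is to show that every triple $T$ has $\Omega(n^{9})$ absorbing 9-sets, or to use lattice-based absorption: show that any two vertices are reachable via $\Omega(n^{3})$ connecting sets. The core structure makes this work. For vertices $u,w$, the cores of $N_H(u)$ and $N_H(w)$ each have size above $2n/3$, so they share $\Omega(n)$ vertices. Each of these vertices has linear degree in both cores, which gives $\Omega(n^2)$ common link edges, and so $u$ and $w$ are $1$-reachable. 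With all pairs reachable, the Lo--Markstr\"om framework yields the absorbing matching $M_0$.

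Third, and this is the main obstacle, we prove an almost perfect matching lemma for $H'=H-V(M_0)$, which still satisfies $\rho(N_{H'}(v))>(2/3+\gamma/2)n'$. It suffices to find a fractional matching of size about $n/3$ in every induced subgraph obtained by random vertex sampling, and then apply weak regularity or the Frankl--R\"odl/Pippenger nibble to turn it into an integral matching covering all but $\gamma^{6}n$ vertices. By LP duality, we need to show that every fractional vertex cover has weight at least $(1/3-o(1))n$. Suppose $w$ is a cover of weight below $n/3$. Take a vertex $v$ of small weight. Its link must then be covered by the weights, so $N_H(v)$ is essentially dominated by the vertex set of heavy weight. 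If fewer than about $n/3$ vertices carry weight close to $1/3$ or more, the link is nearly contained in the graphs with all edges meeting a set $S$ with $|S|\le n/3$. The maximum spectral radius of such graphs is $\frac12(s-1+\sqrt{(s-1)^2+4s(n-s-1)})\approx \frac{1+\sqrt{5}}{6}n<(2/3)n$ for $s=n/3$, which contradicts the hypothesis. This is the same computation as the paper's fractional matching bound, which I would prove first via a Feng--Yu--Zhang-type spectral extremal argument for graphs of bounded cover weight. I expect this step to need the most care, since a fractional cover can have weights spread out rather than being a $0/1$ set. I would handle that with a stability argument that rounds $w$, bounding $\rho$ using the quadratic form $x^{T}Ax\le\sum_{ab\in E}2x_ax_b$ together with $w_a+w_b+w_v\ge1$.

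Finally, we cover $V(H')$ except for a leftover set $W$ with $|W|\le\gamma^{6}n$, and $3\mid|W|$ since $3\mid n$. Absorbing $W$ into $M_0$ completes the perfect matching.
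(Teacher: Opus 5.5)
\textbf{There is a genuine gap in the absorbing step.} Your overall architecture matches the paper's: an absorbing matching, then an almost perfect matching built from fractional matchings and the nibble. But the absorbing step rests on a false lemma. You claim that $N_H(v)$ contains a subgraph of minimum degree at least $(2/3+\gamma)n$. That claim is wrong, and so is its justification: deleting any vertex of a connected graph strictly lowers $\rho$ (delete a leaf of a star). For a concrete counterexample, take $G=K_s\vee\overline{K_{n-1-s}}$ with $s=(1/3+\delta)n$. Then $\rho(G)\approx\frac n2\bigl(\alpha+\sqrt{4\alpha-3\alpha^2}\bigr)$ with $\alpha=1/3+\delta$, which is $(2/3+\delta-O(\delta^2))n$. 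So $G$ satisfies the hypothesis with $\gamma=\delta/2$. Yet every subgraph of $G$ either contains a vertex of the independent part, which has degree at most $s$, or lies inside $K_s$. Hence no subgraph has minimum degree above $(1/3+\delta)n$. These are exactly the links of the near-extremal hypergraphs $H_1(s,n)$, so this is not a technicality. Your derivation that any two vertices have $\Omega(n^2)$ common link edges is therefore unsupported, and so is the $1$-reachability you feed into Lo--Markstr\"om.

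\textbf{What is missing.} The fact you need is that two graphs on a common $n$-set, each with spectral radius above $(2/3+\gamma)n$, share $\Omega(\gamma^2n^2)$ edges. It is true but sharp, and it cannot come from degree or edge counting. Each link has only about $2n^2/9$ edges. Moreover, $K_{2n/3}\cup\overline{K_{n/3}}$ and its complement $K_{n/3}\vee\overline{K_{2n/3}}$ are edge-disjoint, and both have spectral radius about $2n/3$. The missing ingredient is a spectral Nordhaus--Gaddum inequality. The paper uses Terpai's theorem $\rho(G)+\rho(\overline{G})\le 4n/3-1$ (Theorem~\ref{speccompl}). Suppose $N_H(u)$ and $N_H(w)$ share only a small edge set $\mathcal{E}$. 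Then $N_H(u)-\mathcal{E}\subseteq\overline{N_H(w)}$. By Stanley's bound (Theorem~\ref{edgespec}), removing $\mathcal{E}$ costs at most about $\sqrt{2|\mathcal{E}|}$ in the Rayleigh quotient. This contradicts $\rho(N_H(u))+\rho(N_H(w))>(4/3+2\gamma)n$; see Lemma~\ref{commneig}. With that lemma in place your absorbing step goes through, and the paper builds $6$-vertex absorbers directly from it.

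\textbf{Smaller points.}

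In the fractional step, for $s=n/3$ the quantity $\frac12\bigl(s-1+\sqrt{(s-1)^2+4s(n-s-1)}\bigr)$ is about $\frac n2\bigl(\frac13+1\bigr)=\frac{2n}{3}$, not $\frac{1+\sqrt5}{6}n$. This is precisely why the theorem is tight, and it leaves no constant-size slack for a lossy rounding of $w$. No rounding is needed, though. If $v$ has minimum weight, Theorem~\ref{s_match}(i) gives a matching $M$ of size $n/3$ in $N_H(v)$. Then $\sum_u w(u)\ge |M|(1-w(v))+(n-2|M|)w(v)=n/3$. This is essentially the paper's proof of Theorem~\ref{thm1.3}.

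You would also need to show that the spectral hypothesis survives random vertex sampling. The paper avoids this: it applies Theorem~\ref{thm1.3} to $H'$ minus sparse edge sets, controls supports via Aharoni--Holzman--Jiang, and then samples edges before applying the nibble.

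Finally, $\deg_H(v)\gtrsim 2n^2/9$ is not weaker than $\binom{2n/3}{2}$. It is weaker than the Khan/K\"uhn--Osthus--Treglown threshold $\binom{n-1}{2}-\binom{2n/3}{2}$.
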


The condition in Theorem \ref{thm1.1} is asymptotically tight. To see this, consider two 3-graphs. Let $H_1(s,n)$ and $H_2(s,n)$ be 3-graphs with vertex set $[n]$ and edge sets
\[
E(H_1)=\left\{e\in \binom{[n]}{3}: e\cap [s]\neq \emptyset\right\} \text{ and } E(H_2)=\left\{e\in \binom{[n]}{3}: |e\cap [2s - 1]|\geq 2\right\}
\]
respectively. Clearly, neither $H_1(n/3-1,n)$ nor ${H_2(n/3,n)}$ has a perfect matching. Note that  
\[
\min \{\rho(N_{H_1(n/3 - 1,n)}(v))\ :\ v\in [n]\}=2n/3-2,
\]
and 
\[
\min \{\rho(N_{{H_2(n/3,n)}}(v))\ :\ v\in [n]\}=2n/3-2.
\]
Furthermore, 
\[
\min \{\rho(N_{H_1(s,n)}(v))\ :\ v\in [n]\}=\frac{1}{2}\Big(s-1+\sqrt{(s-1)^2+4s(n-s-1)}).
\]

Based the the constructions of $H_1(s,n)$ and $H_2(s,n)$, we would like to propose the following conjecture. 
\begin{conj}\label{conj:local-spectral}
Let $n,s$ be integers with $n\ge 3s+3$, and let $H$ be a $3$-graph with vertex set $[n]$. Suppose that for every $v\in V(H)$,
\[
 \rho\big(N_H(v)\big)>\frac{1}{2}\Big(s-1+\sqrt{(s-1)^2+4s(n-s-1)}\Big).
\]
Then $H$ has a matching of size $s+1$. Moreover, if $n=3s+3$ and for every $v\in V(H)$ we have
\[
 \rho\big(N_H(v)\big)>2n/3-2,
\]
then $H$ contains a perfect matching.
\end{conj}

The following result shows that Conjecture \ref{conj:local-spectral} holds for small matchings. 
\begin{thm}\label{thm1.2}
    Let $n,s$ be positive integers such that $n\geq 100s$. Let $H$ be a $3$-graph with vertex set $[n]$. If for every $v\in V(H)$,
\[
 \rho(N_H(v))>\frac{1}{2}\Big(s-1+\sqrt{(s-1)^2+4s(n-s-1)}\Big),
 \]
  then $H$ has a matching of size $s+1$.
\end{thm}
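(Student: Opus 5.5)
We argue by contradiction: suppose $\nu(H)\le s$, and let $M=\{e_1,\dots,e_t\}$, $t\le s$, be a maximum matching with $S=V(M)$, $|S|=3t\le 3s$. Every edge of $H$ meets $S$, and the set $U=V(H)\setminus S$ has $|U|\ge n-3s\ge 97s$. The first tool is the spectral result of Feng, Yu and Zhang \cite{FYZ17}. For a graph $G$ on $n-1$ vertices with $\nu(G)\le s$, we have $\rho(G)\le \max\{2s,\ \rho(K_s\vee \overline{K}_{n-1-s})\}$. The second term equals $\frac12\big(s-1+\sqrt{(s-1)^2+4s(n-s-1)}\big)$, and this is the maximum since $n\ge 100s$. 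The hypothesis therefore gives $\nu(N_H(v))\ge s+1$ for every vertex $v$.

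We also want a stability version. If $\rho(G)$ exceeds the threshold, or is at least of order $\sqrt{sn}$, then either $G$ has a large matching or $G$ contains a vertex set $C$ of size at most $s$ covering almost all edges, with every vertex of $C$ adjacent to almost all of $V(G)$. This follows from $\rho^2\le \max_v\sum_{u\sim v}d(u)$, together with the observation that a graph with matching number at most $s$ has a vertex cover of size at most $2s$.

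\textbf{Augmentation step.} Take $u\in U$. Its link $N_H(u)$ has a matching $F$ of size $s+1$. Each edge $xy\in F$ gives an edge $uxy$ of $H$, which must meet $S$. Any edge of $F$ lying entirely in $U$ would extend $M$, so every edge of $F$ meets $S$. Since $\nu(N_H(u))\ge s+1>t$, pigeonhole gives some $e_i\in M$ meeting at least two edges of $F$. The standard switching then applies: if $e_i=\{a,b,c\}$ and $uax$, $u'by$ are edges with $x,y\in U$ and $u,u'$ distinct fresh vertices of $U$, we replace $e_i$ by these two edges and obtain a larger matching. Because $U$ is large, we count over all $u\in U$. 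Call $e_i$ \emph{rich} if at least two of its vertices $a$ have many pairs $(u,x)\in U^2$ with $uax\in H$. A rich $e_i$, combined with a greedy choice of disjoint $u,x,u',y$ (possible since $|U|\ge 97s$), yields the augmentation.

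\textbf{Main obstacle.} The difficult case is when every $e_i$ has at most one vertex $a_i$ with large degree into $\binom{U}{2}$. Then the edges touching $U$ essentially use only the $t\le s$ vertices $a_1,\dots,a_t$ together with a few pairs inside $S$. For $u\in U$, the link $N_H(u)$ is then covered, up to $O(s^2)$ edges within $S$, by the stars at $a_1,\dots,a_t$ plus edges inside $S$. This graph is close to a subgraph of $K_t\vee \overline{K}_{n-1-t}$ with additional structure on the $3t$ vertices of $S$. We will bound $\rho(N_H(u))$ by a direct eigenvector computation: treat the Perron vector as constant on $U$-type vertices, and split the entries into those on $A=\{a_i\}$, those on $S\setminus A$, and those on $U$. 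The goal is $\rho\le \frac12\big(s-1+\sqrt{(s-1)^2+4s(n-s-1)}\big)$, with equality only when $t=s$ and the link is exactly $K_s\vee\overline{K}_{n-1-s}$, which contradicts the strict inequality. The sharpness of this estimate, ensuring the $S\setminus A$ vertices contribute only lower-order terms because $n\ge 100s$, is where I expect the real work. If it fails, I would first average over $u\in U$ to show that some $u$ has a sparse link inside $S\setminus A$.
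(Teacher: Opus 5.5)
Your first two steps are sound. For $n\ge 100s$, the Feng--Yu--Zhang bound gives $\nu(N_H(v))\ge s+1$ for every $v$. Also, an edge $e_i\in M$ with two vertices each linked to many (say at least $2|U|$) pairs of $\binom{U}{2}$ can be exchanged for two disjoint edges.

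\textbf{The gap.} The remaining case, where each $e_i$ has at most one such vertex $a_i$, contains the extremal configuration $H_1(s,n)$. There you give only a plan, and as formulated it cannot work. The information you keep about $N_H(u)$ (stars at the $a_i$, pairs inside $S$, few edges from $S\setminus A$ to $U$) does not imply $\rho(N_H(u))\le\frac12\big(s-1+\sqrt{(s-1)^2+4s(n-s-1)}\big)$.
\begin{itemize}
\item If $t=|A|=s$, your description allows $K_s\vee\overline{K}_{n-1-s}$ plus one edge inside $S\setminus A$. This graph properly contains a connected graph whose spectral radius equals the threshold, so its spectral radius is strictly larger.
\item If $|A|\le s-1$, the margin between the threshold and $\rho(K_{s-1}\vee\overline{K}_{n-s})$ is only about $\frac12(1+\sqrt{n/s})$, a constant when $n=100s$. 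Meanwhile the $\Theta(s^2)$ pairs inside $S$ can raise $\rho$ by $\Theta(s)$. For large $s$, $K_{s-1}\vee(K_{2s+1}\cup\overline{K}_{97s-1})$ on $100s-1$ vertices has spectral radius about $10.49s$, against a threshold of about $10.46s$.
\end{itemize}
So $S\setminus A$ is not a lower-order effect. Excluding such links needs further exchange arguments from the maximality of $M$. For instance, if $a_i,a_j$ are rich, an edge $\{u,b_i,c_i\}$ or $\{u,b_i,c_j\}$ lets you trade $e_i$ (resp.\ $e_i,e_j$) for one more edge by rerouting $a_i,a_j$ through disjoint pairs of $U$. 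You would also need a separate treatment of matching edges with no rich vertex. None of this is in the proposal.

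Averaging over $u$ does not replace it. Richness controls pairs of $U$ attached to a vertex of $S$, not how many $u\in U$ complete a fixed pair inside $S$ to an edge, and that can be all of them. Your ``stability version'' is also unproved. Above the threshold it is vacuous anyway, since Feng--Yu--Zhang already gives a matching of size $s+1$.

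\textbf{The paper's route.} The paper avoids structural analysis altogether. Let $R=\{v: d_H(v)>3s(n-2)\}$ and $r=|R|$; the case $r\ge s+1$ is a greedy argument. Otherwise, Lemma~\ref{edge-spec-bound} (the Hong--Shu--Fang/Nikiforov bound applied to $K_r$ joined to $N_H(v)-R$) converts the spectral hypothesis into $e(N_H(v)-R)>\frac12(s-r)(n-s-1)$ for every $v\notin R$. Hence $e(H-R)>(n-r)(s-r)(n-s-1)/6$. If $\nu(H-R)\le s-r$, a maximum matching of $H-R$ covers at most $3(s-r)$ vertices, each of degree at most $3s(n-2)$, and every edge of $H-R$ meets them. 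This gives $e(H-R)\le 9s(s-r)(n-2)$, which is impossible for $n\ge 100s$. So $\nu(H-R)\ge s-r+1$, and the high-degree vertices of $R$ extend this matching greedily.

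The idea your plan lacks is this conversion of the spectral condition into an edge count that survives deleting a small vertex set; after that, counting suffices.
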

We also prove a fractional matching version of Conjecture \ref{conj:local-spectral}.
\begin{thm}\label{thm1.3}
    Let $n,s$ be positive integers such that $n\geq 3s+3$. Let $H$ be a $3$-graph with vertex set $[n]$. If for every $v\in V(H)$,
\[
 \rho(N_H(v))>\frac{1}{2}\Big(s-1+\sqrt{(s-1)^2+4s(n-s-1)}\Big),
 \]
  then $H$ has a fractional matching of size $s+1$. Moreover, if $n=3s+3$ and for every $v\in V(H)$, $\rho(N_H(v))>2n/3-2$, then $H$ has a perfect fractional matching.
\end{thm}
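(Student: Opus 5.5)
We argue through LP duality. Since the fractional matching polytope is described by a linear program, $H$ has no fractional matching of size $s+1$ exactly when its fractional matching number $\nu^*(H)$ is less than $s+1$. By LP duality, $\nu^*(H)=\tau^*(H)$, so this happens exactly when there is a fractional vertex cover $w:V(H)\to[0,1]$ with $\sum_{u\in e}w(u)\ge 1$ for every $e\in E(H)$ and total weight $W:=\sum_u w(u)<s+1$. So we fix a minimum fractional cover $w$ with $W<s+1$ and derive a contradiction. The contradiction comes from choosing a vertex $v$ of small weight and showing that its link graph $N_H(v)$ is forced to be sparse in a spectral sense.

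First choose $v$ with $w(v)$ minimum. Then $w(v)\le W/n<(s+1)/n\le 1/3$. Every edge $xy$ of $G:=N_H(v)$ satisfies $w(x)+w(y)\ge 1-w(v)$. Hence the weights $w'(u):=w(u)/(1-w(v))$, restricted to $V(G)$, form a fractional vertex cover of $G$. Its total weight is at most
\[
\frac{W-w(v)}{1-w(v)}.
\]
When $w(v)=0$ this total is less than $s+1$.

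The key step is a spectral lemma: a graph $G$ on $n-1$ vertices with fractional vertex cover number $\tau^*(G)<s+1$ has
\[
\rho(G)\le \tfrac12\Big(s-1+\sqrt{(s-1)^2+4s(n-s-1)}\Big),
\]
the spectral radius of $K_s\vee \overline{K_{n-1-s}}$. For graphs, $\tau^*$ is half-integral and equals the fractional matching number. So $\tau^*<s+1$ means $\tau^*\le s+\tfrac12$, and by the Gallai--Edmonds / fractional Tutte structure there is a set $S$ with $i(G-S)-|S|\ge n-1-2s-1$, where $i$ counts isolated vertices. This puts $G$ inside a graph consisting of a set $S$ joined to everything, with few non-isolated vertices outside $S$. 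Comparing the spectral radii of these candidate extremal graphs, in the manner of Feng--Yu--Zhang \cite{FYZ17} but with the fractional structure, will show that $K_s\vee\overline{K_{n-1-s}}$ maximizes $\rho$ when $n\ge 3s+3$. The competitor $K_{2s+1}\cup \overline{K_{n-2s-2}}$ has $\rho=2s$, which is smaller in this range.

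The main obstacle is handling $w(v)>0$, where the scaled cover of $G$ may exceed $s+1$. I would first try to show that some vertex has weight $0$. If every vertex has positive weight, complementary slackness says every vertex is saturated by an optimal fractional matching. That matching is then perfect, giving $\nu^*=n/3\ge s+1$, a contradiction. So some $v$ has $w(v)=0$, and the lemma applies to $N_H(v)$, contradicting the hypothesis.

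For the "moreover" part, $n=3s+3$. Suppose $\nu^*<n/3$. The same argument gives a vertex $v$ with $w(v)=0$ and $\tau^*(N_H(v))<s+1=n/3$. The lemma's bound at $n=3s+3$ is
\[
\tfrac12\Big(s-1+\sqrt{(s-1)^2+4s(2s+2)}\Big)=\tfrac12\big(s-1+(3s+1)\big)=2s=2n/3-2.
\]
This contradicts $\rho(N_H(v))>2n/3-2$.
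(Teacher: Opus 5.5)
Your argument is correct, and it takes a different route from the paper's.

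\textbf{The paper's route.} The paper never looks for a zero-weight vertex. It orders the vertices by decreasing weight of a minimum cover $\omega$ and replaces $H$ by the threshold hypergraph $H'$ of all triples of $\omega$-weight at least $1$. This $H'$ has the same $\tau^*$ and is shifted. It applies Theorem~\ref{s_match} to the link of the last vertex $n$ in $H'$ to get a graph matching $e_1,\ldots,e_{s+1}$. Shiftedness then lets it swap $n$ for $s+1$ unused vertices $u_i$; this is where $n\ge 3s+3$ is needed. The result is an integral matching of size $s+1$ in $H'$, so $\nu^*(H)=\nu^*(H')\ge s+1$.

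\textbf{Your route.} You use complementary slackness to split into two cases.
\begin{itemize}
\item If every weight is positive, an optimal fractional matching is perfect, so $\nu^*=n/3\ge s+1$.
\item Otherwise some $v$ has $w(v)=0$. Then $w$ restricted to $V(N_H(v))$ is a fractional cover of the link of weight $<s+1$.
\end{itemize}

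\textbf{What each buys.} Your route works directly in $H$ and needs neither the shifting construction nor the lifting step. The paper's route avoids LP duality beyond $\nu^*=\tau^*$. It is direct rather than by contradiction, and it exhibits an explicit integral matching in the closure $H'$.

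\textbf{Simplify the spectral lemma.} The lemma you planned to prove by redoing the Feng--Yu--Zhang comparison over the fractional Tutte family needs no new work. Since $\nu(G)\le\nu^*(G)=\tau^*(G)<s+1$, integrality gives $\nu(G)\le s$. Theorem~\ref{s_match} applied on $n-1$ vertices then gives exactly the required bound: part (ii) when $n\ge 3s+4$, and part (i) when $n=3s+3$, where the threshold equals $2s$. Your sketch of that step was left unfinished, and it also slightly misstated the comparison. At $n=3s+3$ the competitor $K_{2s+1}\cup\overline{K_{s+1}}$ ties with $K_s\vee\overline{K_{2s+2}}$ rather than being smaller. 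You should replace the sketch with this one-line reduction.
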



\section{Preliminaries}
In this section, we gather several auxiliary results that will be utilized in the proofs of Theorems~\ref{thm1.1}, \ref{thm1.2}, and \ref{thm1.3}.

Stanley \cite{Stan87} provided an upper bound for the spectral radius of a graph based on the number of its edges. 
\begin{thm}[\cite{Stan87}]\label{edgespec}
    Let $G$ be a simple graph with $m$ edges. Then
    \[
    \rho(G)\le (-1+\sqrt{1+8m})/2.
    \]
\end{thm}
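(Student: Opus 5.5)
We prove Stanley's bound $\rho(G)\le(-1+\sqrt{1+8m})/2$ by a row-sum argument applied to the squared adjacency matrix. Let $A=A(G)$ and $\rho=\rho(G)$. Since $A$ is symmetric and nonnegative, Perron--Frobenius gives a nonnegative unit eigenvector $x$ with $Ax=\rho x$; we may assume $m\ge 1$, so $\rho>0$. Write $N(u)$ for the neighbourhood of $u$ and $d_u=|N(u)|$.

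\textbf{Step 1: a bound on $\rho^2 x_u$.} For every vertex $u$,
\[
\rho^2 x_u=(A^2x)_u=\sum_{w\in N(u)}\rho x_w .
\]
Summing the eigenvalue equations over $w\in N(u)$, and counting which vertices $y$ occur, the same quantity equals
\[
\sum_{w\in N(u)}\sum_{y\in N(w)}x_y=d_u x_u+\sum_{y\ne u}|N(u)\cap N(y)|\,x_y .
\]
A cruder bound suffices here. We use $\sum_{w\in N(u)}\rho x_w\le \rho\sum_{w\in N(u)}x_w$, and we also want to control $\sum_{w\in N(u)}x_w$ in terms of $m$.

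\textbf{Step 2: the Hong-type inequality.} The cleanest route is to bound $\rho\,x_u$ and its square through the edges. We have
\[
\rho^2x_u=\sum_{w\in N(u)}\sum_{y\in N(w)}x_y\le \sum_{w\in N(u)}\sum_{y\in N(w)}x_y+\sum_{w\notin N(u)}\sum_{y\in N(w)}x_y .
\]
Each edge $wy$ with $w\in N(u)$ contributes $x_y$. If instead we sum over all edges, every edge $wy$ contributes $x_w+x_y$ (for $w\notin N(u)$ we add those terms). Edges with both ends in $N(u)$ are counted twice in the left sum, which is why the bookkeeping has to be handled via the complement.

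Concretely, the target inequality is
\[
\rho^2x_u\le (m-d_u)x_u+\rho x_u+\ldots
\]
and it is easiest to obtain as
\[
\rho^2 x_u+\rho x_u\le \sum_{w\in N(u)}\sum_{y\in N(w)}x_y+\sum_{w\in N(u)}x_w .
\]

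\textbf{Step 3: normalisation and the quadratic.} Normalise $x$ so that $\sum_u x_u=1$, and pick $u$ with the maximal coordinate $x_u$. The standard known proof (Stanley's) instead takes row sums of $A^2$ weighted by $x$:
\[
\rho^2=\rho^2\sum_u x_u^2=\sum_u(\rho x_u)^2=\sum_u\Big(\sum_{w\in N(u)}x_w\Big)^2 .
\]
Apply Cauchy--Schwarz to each term, comparing $\sum_{w\in N(u)}x_w$ with $\sqrt{d_u}$ times $\big(\sum_{w\in N(u)}x_w^2\big)^{1/2}$, together with the bound $\sum_{w\in N(u)}x_w^2\le 1-x_u^2$, since $u\notin N(u)$. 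This gives
\[
\rho^2x_u^2\le d_u(1-x_u^2),
\]
and hence $\rho^2x_u^2+d_ux_u^2\le d_u$. This is not yet it. The better form, which is Stanley's trick, is
\[
\rho x_u=\sum_{w\in N(u)}x_w ,
\]
so $\rho^2x_u^2\le d_u\sum_{w\in N(u)}x_w^2$. Summing over $u$ gives
\[
\rho^2\le\sum_u d_u\sum_{w\in N(u)}x_w^2=\sum_w x_w^2\sum_{u\in N(w)}d_u .
\]
This leads to $\rho^2\le\max_w\sum_{u\in N(w)}d_u$. For each $w$, the edges incident to $N(w)$ number at most $m-(\text{edges avoiding }N(w))$, and $\sum_{u\in N(w)}d_u\le m+e(N(w))$. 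Combined with $\rho x_w$ relations, this yields $\rho^2\le 2m-\rho$, that is, $\rho^2+\rho-2m\le0$, which is exactly the claimed bound.

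\textbf{Main obstacle.} The hard step is the last one: deriving the intermediate inequality
\[
\rho^2\le 2m-\rho .
\]
My first attempt would use the entrywise bound $(A^2)_{uu}=d_u$ together with the trace identity $\operatorname{tr}A^2=2m$. The key ingredient is Stanley's inequality $\sum_u\big(\sum_{w\in N(u)}x_w\big)^2\le 2m\max_u x_u^2$, coupled with $\rho x_u\ge\ldots$. If that does not work directly, I would fall back on the known inequality $\rho^2\le 2m-n+1$ for connected graphs and pass to components.
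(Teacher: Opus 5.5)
The paper gives no proof of this statement; it is simply quoted from Stanley. Measured against the standard argument, your proposal has a genuine gap: the inequality $\rho^2\le 2m-\rho$, which you yourself single out as the heart of the matter, is never derived.

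\textbf{Where the argument stops.} Steps~1 and~2 end in identities or in an expression trailing off with ``$\ldots$''. For instance, $\rho^2x_u+\rho x_u\le\sum_{w\in N(u)}\sum_{y\in N(w)}x_y+\sum_{w\in N(u)}x_w$ holds with equality, so it gives nothing. The route you finally follow correctly reaches $\rho^2\le\max_w\sum_{u\in N(w)}d_u$. But the passage from there to $2m-\rho$ (``combined with $\rho x_w$ relations'') is only asserted. The counting bound $\sum_{u\in N(w)}d_u\le m+e(N(w))$ contains no $\rho$, so it cannot produce $2m-\rho$ on its own. (The inequality $\sum_{u\in N(w)}d_u\le 2m-\rho$ does hold. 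However, it is equivalent to $\rho\le d_w+X+2Z$, where $X$ counts edges between $N(w)$ and $R=V(G)\setminus(N(w)\cup\{w\})$ and $Z=e(G[R])$, and that needs its own spectral argument.)

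\textbf{Other errors.} The ``key ingredient'' in your last paragraph is false. For a unit eigenvector the left side $\sum_u\bigl(\sum_{w\in N(u)}x_w\bigr)^2$ equals $\rho^2$, and for $K_n$ with $x_u=1/\sqrt n$ the claimed inequality reads $(n-1)^2\le n-1$. You also switch from $\sum_u x_u=1$ to $\sum_u x_u^2=1$ inside Step~3.

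\textbf{The missing line.} You were one line from a complete proof when you wrote ``This is not yet it''. For a unit eigenvector $x$ you correctly obtained $\rho^2x_u^2\le d_u(1-x_u^2)$ from $\rho x_u=\sum_{w\in N(u)}x_w$, Cauchy--Schwarz and $u\notin N(u)$. Instead of rearranging this vertex by vertex, sum it over all $u$:
\[
\rho^2=\sum_u\rho^2x_u^2\le\sum_u d_u-\sum_u d_ux_u^2=2m-\sum_u d_ux_u^2 .
\]
The missing idea is the lower bound $\sum_u d_ux_u^2\ge\rho$. It follows from
\[
\sum_u d_ux_u^2-x^{T}Ax=\sum_{uv\in E(G)}\bigl(x_u^2+x_v^2-2x_ux_v\bigr)=\sum_{uv\in E(G)}(x_u-x_v)^2\ge 0
\]
together with $x^{T}Ax=\rho$. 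Hence $\rho^2+\rho-2m\le 0$, which is exactly the stated bound; this is essentially Stanley's proof.

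\textbf{The fallback.} Your fallback via Hong's inequality $\rho^2\le 2m-n+1$ for connected graphs can also be completed. Apply it to each component $C$ containing an edge and add $\rho(C)\le|V(C)|-1$ to get $\rho(C)^2+\rho(C)\le 2e(C)\le 2m$. As written, though, it is only a hedge. It also obtains the statement from a stronger theorem whose proof is the same kind of eigenvector computation.
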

The following sharp upper bound on the spectral radius was obtained
by Hong, Shu and Fang \cite{hong02} and Nikiforov \cite{NIKIFOROV_2002}, independently.
\begin{thm}[\cite{hong02,NIKIFOROV_2002}]\label{hong02}
    Let $G$ be a simple connected graph with $n$ vertices, $m$ edges and minimum degree $\delta$.
    Then
    \[
    \rho(G)\leq \frac{\delta-1+\sqrt{(\delta+1)^2+4(2m-\delta n)}}{2}.
    \]
\end{thm}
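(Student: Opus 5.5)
We work with the auxiliary matrix $B:=A^2-(\delta-1)A$, where $A=A(G)$, together with a Perron eigenvector $x$ of $A$. Since $G$ is connected, we can take $x>0$ with $Ax=\rho x$, $\rho=\rho(G)$. Then
\[
Bx=(\rho^2-(\delta-1)\rho)\,x .
\]
The entries of $B$ may be negative, since $(A^2)_{uw}$ can be smaller than $\delta-1$ for adjacent $u,w$. So we will not use the usual ``look at the vertex with the largest $x_u$'' argument. Instead we sum the eigen-equation over all vertices and use the symmetry of $B$.

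\textbf{Row sums of $B$.} For a vertex $u$, the $u$-th row sum of $A^2$ is $\sum_{v\sim u} d_v$, so the $u$-th row sum of $B$ is $\sum_{v\sim u} d_v-(\delta-1)d_u$. We bound it via
\[
\sum_{v\sim u} d_v = 2m-d_u-\sum_{v\ne u,\,v\not\sim u} d_v \le 2m-d_u-(n-1-d_u)\delta .
\]
The $n-1-d_u$ non-neighbours of $u$ each have degree at least $\delta$. Substituting, the $d_u$-terms cancel, and every row sum of $B$ is at most $2m-(n-1)\delta$.

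\textbf{Summing the eigen-equation.} Sum the identity $(Bx)_u=(\rho^2-(\delta-1)\rho)x_u$ over all $u$. The left side equals $\sum_w x_w\cdot(\text{column sum } w \text{ of } B)$. Because $B$ is symmetric, each column sum equals the corresponding row sum, hence is at most $2m-(n-1)\delta$. Since all $x_w>0$, this gives
\[
(\rho^2-(\delta-1)\rho)\sum_u x_u \le (2m-(n-1)\delta)\sum_w x_w .
\]
Dividing by $\sum_u x_u>0$ yields
\[
\rho^2-(\delta-1)\rho-(2m-n\delta+\delta)\le 0 .
\]

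\textbf{Solving the quadratic.} From this inequality, $\rho$ is at most the larger root,
\[
\frac{\delta-1+\sqrt{(\delta-1)^2+4\delta+4(2m-\delta n)}}{2}.
\]
Since $(\delta-1)^2+4\delta=(\delta+1)^2$, this is exactly the claimed bound.

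\textbf{Main obstacle.} The only delicate point is the sign problem in $B$. A pointwise argument at the maximal coordinate would require controlling the negative entries of $B$. Summing over all coordinates avoids this entirely: it only needs the row-sum bound and positivity of $x$. Connectedness is used only to guarantee that such a positive eigenvector exists.
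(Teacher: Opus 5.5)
Your proof is correct and complete. The paper gives no proof of this statement: it quotes it from Hong, Shu and Fang and from Nikiforov and uses it as a black box in Lemma~\ref{edge-spec-bound}. There is therefore no internal argument to compare yours against, and yours stands as a self-contained proof.

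Every step checks out:
\begin{itemize}
\item The row sum of $B$ at $u$ is $\sum_{v\sim u}d_v-(\delta-1)d_u$.
\item The non-neighbour estimate makes the $d_u$-terms cancel, so every row sum is at most $2m-(n-1)\delta$.
\item The identity $(\rho^2-(\delta-1)\rho)\,\mathbf{1}^T\mathbf{x}=\mathbf{1}^TB\mathbf{x}=\sum_w(\text{column sum } w)\,x_w$ shows that $\rho^2-(\delta-1)\rho$ is a weighted average of the column sums of $B$. This is exactly why the negative entries of $B$ do no harm. Symmetry of $B$ then turns column sums into row sums.
\item The quadratic is solved correctly using $(\delta-1)^2+4\delta=(\delta+1)^2$.
\end{itemize}

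One small sharpening: you never need $\mathbf{x}>0$. A nonnegative eigenvector $\mathbf{x}\ge 0$, $\mathbf{x}\neq 0$, suffices, because each term then satisfies $r_w x_w\le (2m-(n-1)\delta)\,x_w$ and $\sum_w x_w>0$. Perron--Frobenius supplies such an eigenvector for $\rho$ in every graph, connected or not. So your computation actually proves the bound without the connectedness hypothesis. With $\delta=0$ it recovers Stanley's bound (Theorem~\ref{edgespec}), which indeed holds for all graphs.

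For the paper's purposes this changes nothing. In Lemma~\ref{edge-spec-bound} the theorem is applied to $K_r+G'$ with $r\ge 1$, which is connected anyway.
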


We require the following Nordhaus–Gaddum type inequality concerning
the sum of the spectral radii of a graph and its complement, which was
conjectured by Nikiforov~\cite{Ni07} and subsequently established by Terpai~\cite{Ter11}.

\begin{thm}[\cite{Ter11}]\label{speccompl}
    Let $G$ be a simple graph with $n$ vertices. Then 
    \[
    \rho(G)+\rho(\overline{G})\le 4n/3-1.
    \]
\end{thm}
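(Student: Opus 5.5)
The statement is Terpai's theorem, which the paper simply cites, but here is how I would try to prove it directly. The idea is to linearize with Perron vectors and reduce to a continuous optimization over pairs of unit vectors. Let $A=A(G)$, so $A(\overline{G})=J-I-A$. Let $x,y\ge 0$ be unit Perron vectors of $A(G)$ and $A(\overline{G})$ respectively. Then
\[
\rho(G)+\rho(\overline{G}) = x^{T}Ax + y^{T}(J-I-A)y=\sum_{i\neq j}\bigl(a_{ij}x_ix_j+(1-a_{ij})y_iy_j\bigr).
\]
Since $a_{ij}\in\{0,1\}$, each off-diagonal term is at most $\max(x_ix_j,y_iy_j)$. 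This gives
\[
\rho(G)+\rho(\overline{G})\le \sum_{i,j}\max(x_ix_j,y_iy_j)-\sum_i \max(x_i^2,y_i^2).
\]
The last sum is at least $\sum_i x_i^2=1$. Hence it suffices to prove the purely analytic inequality
\[
\Phi(x,y):=\sum_{i,j=1}^{n}\max(x_ix_j,y_iy_j)\le \frac{4n}{3}
\quad\text{for all unit } x,y\in\mathbb{R}^n_{\ge 0}.
\]
This reduction is where the exact $-1$ comes from.

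To prove the bound on $\Phi$, I would reparametrize each coordinate as $x_i=r_i\cos\theta_i$, $y_i=r_i\sin\theta_i$ with $\theta_i\in[0,\pi/2]$. Then
\[
\max(x_ix_j,y_iy_j)=r_ir_j\max(\cos\theta_i\cos\theta_j,\sin\theta_i\sin\theta_j),
\]
and the unit constraints become $\sum r_i^2\cos^2\theta_i=\sum r_i^2\sin^2\theta_i=1$. Pass to the continuous (graphon-like) version: a measure on $[0,\pi/2]$ with total mass $n$ and a weight function $r$. This is natural because the inequality is homogeneous of degree one in $n$ after scaling $r\sim n^{-1/2}$.

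I would then use a Lagrange-multiplier and symmetrization argument. The kernel $K(\theta,\phi)=\max(\cos\theta\cos\phi,\sin\theta\sin\phi)$ splits according to whether $\theta+\phi\le\pi/2$. One shows that an optimal configuration is supported on at most two or three angles. Indeed, for fixed values elsewhere, the first-variation condition forces $\theta$ to lie where a one-variable function attains its maximum, and that function is piecewise a combination of $\cos$ and $\sin$. Computing the finitely many candidate configurations should give $4n/3$. The candidates correspond to the near-extremal graphs, for instance a clique on about $n/3$ vertices joined completely to an independent set, in the spirit of $H_1$ and $H_2$ above.

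The main obstacle is this last reduction: rigorously showing that the optimum of $\Phi$ concentrates on few angles and then evaluating it. $K$ is not bilinear across the line $\theta+\phi=\pi/2$, so standard convexity arguments do not apply directly. My first attempt would be a pairwise merging argument. Take two coordinates with angles on the same side of every threshold $\pi/2-\theta_k$. Then $\Phi$ is a quadratic form in those two coordinates, so replacing them by a suitable common value does not decrease $\Phi$. Iterating this should reduce to a bounded number of angle classes, after which the problem becomes a finite-dimensional calculus exercise.
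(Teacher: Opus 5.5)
Your reduction is correct, but it reformulates the problem without simplifying it. Set $a'_{ij}=1$ exactly when $x_ix_j\ge y_iy_j$; conversely, plug in Perron vectors. Together these show that $\max_{x,y}\Phi(x,y)=\max_{A'}\bigl(\rho(A')+\rho(J-A')\bigr)$, taken over all symmetric $0/1$ matrices $A'$ with loops allowed. So $\Phi\le 4n/3$ is exactly the graphon form $\lambda(W)+\lambda(1-W)\le 4/3$ of Terpai's theorem, restricted to step functions. Your extraction of the $-1$ from $A(\overline{G})=J-I-A$ is how the finite statement follows from that form. The paper proves nothing here and only cites Terpai. Since your reformulation loses nothing, all of the difficulty of the theorem now sits in the analytic inequality, and your sketch of that part does not go through.

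\textbf{Where it breaks.} The unproved claim is that an optimizer is supported on two or three angles. The matrix induced by the angles is always a threshold pattern: $a'_{ij}=1$ iff $\theta_i+\theta_j\le\pi/2$.

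\emph{Pairwise merging.} Your merging is valid only for twins. Two coordinates on the same side of every threshold have identical rows in $A'$. Then $x^{T}A'x$ depends on $(x_i,x_j)$ only through $x_i+x_j$ (and likewise for $y$), so equalizing them does not decrease $\Phi$. But merging twins does nothing when every two angles are separated by some $\pi/2-\theta_k$. For instance, if the angles and their thresholds interleave, as in a discretized half graph, all $n$ classes are distinct and no merge is available.

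\emph{First variation.} This argument is circular in the same way. The one-variable function you maximize is piecewise of the form $A\cos\theta+B\sin\theta$, with breakpoints at the thresholds $\pi/2-\theta_k$. That gives roughly one piece per support angle, so ``one maximizer per piece'' places no bound on the number of support angles.

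\textbf{What is missing.} You would need a global argument that rules out threshold configurations with many classes: either a non-twin merge or mass transfer that increases $\Phi$, or a direct proof of the inequality. After that you would still have to carry out the finite optimization you defer. That step is already a cubic eigenvalue problem for three classes, and it must show that the maximum $4/3$ is attained by the split configuration with a clique of measure $1/3$. As written, the two places where you say ``should'' carry the entire content of the theorem.
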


As a consequence of Theorems~\ref{edgespec} and~\ref{speccompl}, we obtain the following lower bound
on the number of common edges of two graphs whose spectral radii are large in total.

\begin{lemma}\label{commneig}
    Suppose that $\gamma$ is a constant with $0<\gamma<1/4$. Let $n\in \mathbb{Z}$ be sufficiently large and let $G_1,G_2$
    be two simple graphs with the same vertex set $[n]$. If 
    \[
    \rho(G_1)+\rho(G_2)\ge (4/3+\gamma)n,
    \]
    then $\left|E(G_1)\cap E(G_2)\right|\ge \gamma^2n^2/2$.
\end{lemma}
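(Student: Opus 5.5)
Let $G=G_1\cap G_2$ be the common graph on $[n]$ with edge set $E(G_1)\cap E(G_2)$, and set $m=|E(G)|$. The plan is to compare $G_1$ and $G_2$ through the complement of $G$ and apply Theorems~\ref{speccompl} and~\ref{edgespec}. Every edge of $G_2$ that is not an edge of $G_1$ lies in $\overline{G_1}$. Hence $E(G_2)\subseteq E(\overline{G_1})\cup E(G)$, so $A(G_2)\le A(\overline{G_1})+A(G)$ entrywise. By Perron--Frobenius monotonicity and subadditivity of the spectral radius on nonnegative symmetric matrices (take the Perron vector $x$ of $G_2$ and bound $x^TA(G_2)x\le x^TA(\overline{G_1})x+x^TA(G)x$),
\[
\rho(G_2)\le \rho(\overline{G_1})+\rho(G).
\]

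Now combine this with Theorem~\ref{speccompl} applied to $G_1$:
\[
(4/3+\gamma)n\le \rho(G_1)+\rho(G_2)\le \rho(G_1)+\rho(\overline{G_1})+\rho(G)\le 4n/3-1+\rho(G).
\]
This gives $\rho(G)\ge \gamma n+1$.

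Next apply Stanley's bound, Theorem~\ref{edgespec}, to $G$. It gives $\gamma n+1\le(-1+\sqrt{1+8m})/2$, that is, $\sqrt{1+8m}\ge 2\gamma n+3$. Therefore
\[
8m\ge (2\gamma n+3)^2-1\ge 4\gamma^2n^2,
\]
so $m\ge \gamma^2n^2/2$, as required. Only $\gamma>0$ is needed here; the hypotheses $\gamma<1/4$ and $n$ large are not actually used in this route.

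The only step that needs care is the inequality $\rho(G_2)\le\rho(\overline{G_1})+\rho(G)$. The overlap between $\overline{G_1}$ and $G$ is empty, so $A(G_2)\le A(\overline{G_1})+A(G)$ holds entrywise. The Rayleigh quotient with the nonnegative Perron vector of $G_2$ then gives the bound directly. The remaining steps are routine algebra.
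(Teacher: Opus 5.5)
Your proof is correct and uses the same ingredients as the paper. The paper also splits off the common edges, bounds the rest with Terpai's inequality (Theorem~\ref{speccompl}) via subadditivity of the spectral radius, and converts the surplus $\gamma n$ into $\gamma^2n^2/2$ edges with Stanley's bound (Theorem~\ref{edgespec}). The only difference is cosmetic: the paper argues by contradiction, removes the common edges from $G_1$, and applies Theorem~\ref{speccompl} to $G_1\setminus\mathcal{E}$ (whose complement contains $G_2$), whereas you apply it to $G_1$ directly and split $G_2$, which makes the monotonicity step explicit. You are also right that $\gamma<1/4$ and large $n$ are never used.
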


\begin{proof}
 Suppose to the contrary that $\left|E(G_1)\cap E(G_2)\right|< \gamma^2n^2/2$. Let $\mathcal{E}=E(G_1)\cap E(G_2)$. Now, assume that $\left|\mathcal{E}\right|\leq \gamma^2n^2/2$. Let $G_1' = G_1\setminus \mathcal{E}$. According to Theorem~\ref{speccompl}, we have the following inequality:
\begin{align}\label{eq1.1}
    \rho(G_1')+\rho(G_2)\leq 4n/3 - 1.
\end{align}
Let $\mathbf{x}$ and $\mathbf{y}$ be the unit eigenvectors of the adjacency matrices $A(G_1)$ and $A(G_2)$, which correspond to the spectral radii $\rho(G_1)$ and $\rho(G_2)$, respectively. Then, we get the following:
    \begin{align}\label{eq1.2}
    \mathbf{x}^TA(G_1)\mathbf{x}+\mathbf{y}^TA(G_2)\mathbf{y}
    =\rho(G_1)+\rho(G_2)\ge (4/3+\gamma)n.
    \end{align}
    On the other hand,  based on inequality (\ref{eq1.1}) and Theorem~\ref{edgespec}, we obtain the following:
    \begin{align*}
    \mathbf{x}^TA(G_1)\mathbf{x}+\mathbf{y}^TA(G_2)\mathbf{y}
&=\mathbf{x}^TA(G_1')\mathbf{x}+\mathbf{y}^TA(G_2)\mathbf{y}
         +\mathbf{x}^TA(G_1[\mathcal{E}])\mathbf{x}\\
        &\le \rho(G_1')+\rho(G_2)+(-1+\sqrt{1+4\gamma^2n^2})/2\\
        &\le 4n/3-1+\gamma n,
    \end{align*}
  which contradicts (\ref{eq1.2}). Thus, the proof is completed.
\end{proof}

The following result establishes a connection between the spectral radius and the matching number of a graph, a key relationship that will be
employed in the proof of Theorem~\ref{thm1.3}.

\begin{thm}[\cite{FYZ17}]\label{s_match}
	For any $n$-vertex graph $G$ with $\nu(G)\leq m$, we have
	\begin{itemize}
		\item  [$(i)$] if $n=3m+2$, then $\rho(G)\leq 2m$ with equality if and only if $G=K_m\vee\overline{K_{n-m}}$ or $G=K_{2m+1}\cup\overline{K_{n-2m-1}}$;
		\item  [$(ii)$] if $n>3m+2$, then $\rho(G)\leq\frac{1}{2}(m-1+\sqrt{(m-1)^2+4m(n-m)})$ with equality if and only if $G=K_m\vee\overline{K_{n-m}}$.
	\end{itemize}
\end{thm}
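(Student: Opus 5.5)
The plan is to reduce, via the Tutte–Berge formula and edge-monotonicity of the spectral radius, to a one-parameter family of extremal graphs, and then compare their spectral radii. By the Tutte–Berge formula, $\nu(G)\le m$ gives a set $S\subseteq V(G)$, $|S|=s$, such that $G-S$ has $q\ge s+n-2m$ odd components. Adding edges never decreases $\rho$, and it cannot push $\nu$ above $m$ as long as the Tutte–Berge witness is kept. So we may assume:
\begin{itemize}
\item $S$ is a clique completely joined to everything;
\item every even component of $G-S$ is merged into an odd component;
\item every component is a clique.
\end{itemize}
Thus $G\subseteq K_s\vee (K_{n_1}\cup\cdots\cup K_{n_q})$ with all $n_i$ odd, $\sum n_i=n-s$ and $q=s+n-2m$.

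Next we concentrate the components. Take two components $K_{n_i},K_{n_j}$ with $n_i\ge n_j\ge 3$, and let $\mathbf{x}$ be the Perron vector. Moving two vertices from the smaller clique to the larger one (after relabelling so that the Perron weight comparison is favourable) strictly increases $\rho$. This is the standard Kelmans-type transfer argument, comparing $\mathbf{x}^TA\mathbf{x}$ before and after. Iterating, we get
\[
G\subseteq G_s:=K_s\vee\big(K_{2m-2s+1}\cup \overline{K_{n-2m+s-1}}\big),\qquad 0\le s\le m .
\]
Equality in $\rho$ forces $G$ to equal $G_s$, since every transfer and every added edge strictly increases $\rho$ for a connected graph.

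It remains to compare $\rho(G_s)$ over $0\le s\le m$. The partition into the three parts $S$, the clique $K_{2m-2s+1}$, and the independent set is equitable. Hence $\rho(G_s)$ is the largest root of the characteristic polynomial $f_s(x)$ of the $3\times 3$ quotient matrix
\[
\begin{pmatrix} s-1 & 2m-2s+1 & n-2m+s-1\\ s & 2m-2s & 0\\ s & 0 & 0\end{pmatrix}.
\]
The two endpoints are explicit:
\begin{itemize}
\item $s=m$ gives $K_m\vee\overline{K_{n-m}}$, with $\rho=\tfrac12\big(m-1+\sqrt{(m-1)^2+4m(n-m)}\big)$;
\item $s=0$ gives $K_{2m+1}\cup\overline{K_{n-2m-1}}$, with $\rho=2m$.
\end{itemize}
For $n=3m+2$ the first value is $\tfrac12(m-1+\sqrt{9m^2+6m+1})=2m$, so the endpoints tie. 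For $n>3m+2$ the first value is larger.

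The main obstacle is showing that no interior $s$ with $1\le s\le m-1$ does better, with strict inequality. My first approach is to set $\theta=\max\{\rho(G_0),\rho(G_m)\}$ and show $f_s(\theta)>0$ and $f_s'(x)>0$ for $x\ge\theta$. Since $f_s$ is a monic cubic, this places its largest root strictly below $\theta$. Expanding $f_s(\theta)$ as a polynomial in $s$, I expect it to be concave (or at least to have constant sign behaviour) on $[0,m]$, with the boundary values handled by the endpoint computations. Failing that, a cruder route may suffice: bound $\rho(G_s)$ using Theorem~\ref{hong02}, since $G_s$ has minimum degree $s$ and $2m(G_s)-sn$ is explicit. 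I would check whether that bound already falls below $\theta$ for interior $s$ when $n\ge 3m+2$.

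Strictness then yields the stated equality characterizations. For $n>3m+2$ only $s=m$ attains the maximum; for $n=3m+2$ both $s=0$ and $s=m$ do.
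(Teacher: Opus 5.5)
The paper does not prove this statement; it quotes it from \cite{FYZ17}, so there is no in-paper argument to compare against. Your reduction is the standard route and is sound: take a Tutte--Berge witness $S$, saturate to $K_s\vee(K_{n_1}\cup\cdots\cup K_{n_q})$ with all $n_i$ odd, then transfer vertices two at a time into the largest clique to reach $G_s=K_s\vee(K_{2m-2s+1}\cup\overline{K_{n-2m+s-1}})$ with $s\le m$. Three small points need fixing. First, Tutte--Berge only gives $q\ge s+n-2m$; either merge three odd components at a time, or note that the end graph is a subgraph of $G_s$. Second, no relabelling is needed in the transfer: the Perron weight on $K_{n_i}$ is $\sigma/(\rho-n_i+1)$, where $\sigma$ is the weight on $S$, so it increases with $n_i$. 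Third, for $s=0$ the graph is disconnected, so strictness must come from $\rho=\max_i n_i-1\le n-q\le 2m$, not from a positive Perron vector.

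The genuine gap is the last step, which is where the content of the theorem lies. You only \emph{expect} $f_s(\theta)$ to be concave in $s$. Even $f_s(\theta)>0$ would not by itself put the largest root below $\theta$, since a monic cubic positive at $\theta$ can have two roots above $\theta$. Your remedy, $f_s'>0$ on $[\theta,\infty)$, is left unproved.

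Both points can be settled. Write $a=2m-2s+1$ and $b=n-2m+s-1$. Then $f_s(x)=x(x-s+1)(x-a+1)-asx-bs(x-a+1)$, and with $\theta=\rho(G_m)$ the expansion in $s$ is
\[
g(s):=f_s(\theta)=\theta(\theta+1)(\theta-2m)+\bigl[\theta^2+\theta-(n-2m-1)(\theta-2m)\bigr]s-(2n-6m-2+\theta)s^2-2s^3 .
\]
The three facts you need are:
\begin{itemize}
\item For $n\ge 3m+2$, $g''(s)=-2(2n-6m-2+\theta)-12s<0$ on $[0,m]$, so $g$ is strictly concave there.
\item $g(m)=0$, because $f_m(x)=x\bigl(x^2-(m-1)x-m(n-m)\bigr)$.
\item $g(0)=\theta(\theta+1)(\theta-2m)\ge 0$, because $\theta\ge 2m$ is equivalent to $n\ge 3m+2$.
\end{itemize}
Strict concavity then gives $g(s)>0$ for $0<s<m$, which confirms your guess.

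To control the other two roots, note that for $1\le s\le m-1$
\[
f_s(0)=s(n-2m+s-1)(2m-2s)>0,\qquad f_s(2m-2s)=-s(2m-2s)(2m-2s+1)<0 .
\]
Since also $f_s(x)\to-\infty$ as $x\to-\infty$ and $f_s(\theta)>0$, the cubic has a root in each of $(-\infty,0)$, $(0,2m-2s)$ and $(2m-2s,\theta)$. These are all of its roots, so $\rho(G_s)<\theta$ for every interior $s$.

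This strict inequality gives both equality characterizations. For $n=3m+2$ you have $\rho(G_0)=\rho(G_m)=2m$, and for $n>3m+2$ you have $\rho(G_0)=2m<\theta$. The fallback via Theorem~\ref{hong02} is not needed.
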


For any $S\subseteq V(G)$, let $G[S]$ be the subgraph of $G$ induced by $S$. The following consequence transforms
a lower bound on the spectral radius into a lower bound on the number of
edges that remain after a set of vertices is deleted.

\begin{lemma}\label{edge-spec-bound}
Let $n,s$ be positive integers such that $n\geq s+1$ and let $G$ be a graph with $n$
vertices. If
\[
\rho(G)>\frac{1}{2}\Big(s-1+\sqrt{(s-1)^2+4s(n-s)}\Big),
\]
then for any $R\subseteq V(G)$ with $|R|=r$ we have
\[
e(G-R)>\frac{1}{2}(s-r)(n-s).
\]
\end{lemma}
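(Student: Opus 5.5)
We argue by contraposition. Fix $R\subseteq V(G)$ with $|R|=r$, suppose $e(G-R)\le \frac12(s-r)(n-s)$, and show that $\rho(G)\le\lambda$, where
\[
\lambda:=\frac{1}{2}\Big(s-1+\sqrt{(s-1)^2+4s(n-s)}\Big).
\]
Note that $\lambda$ is the larger root of $t^2-(s-1)t-s(n-s)=0$, so $\lambda(\lambda-s+1)=s(n-s)$. It is also the spectral radius of $K_s\vee\overline{K_{n-s}}$, which gives $\lambda\ge s-1$. In fact $\lambda\ge s$ as soon as $n\ge s+1$, because $\lambda^2-(s-1)\lambda=s(n-s)\ge s$.

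The cases $r\ge s$ are degenerate. If $r>s$, the right-hand side is negative and there is nothing to prove. If $r=s$, the assumption says $G-R$ has no edges. Then $G$ is a subgraph of $K_s\vee\overline{K_{n-s}}$, so $\rho(G)\le\lambda$ by monotonicity of the spectral radius under adding edges. So assume $r<s$.

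Set $F:=G-R$, with $m:=e(F)\le\frac12(s-r)(n-s)$, and let $G':=K_r\vee F$ be obtained from $G$ by adding all missing edges inside $R$ and between $R$ and $V(F)$. Since $G\subseteq G'$, we have $\rho(G)\le\rho(G')$, so it suffices to bound $\rho:=\rho(G')$. If $r=0$, then $G'=F$ and we can use Theorem~\ref{edgespec} together with a comparison of numbers. Alternatively, the argument below goes through verbatim with the terms involving $a$ interpreted suitably; I will handle this small case separately if needed.

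For $r\ge1$, take a nonnegative Perron vector $\mathbf{x}$ of $A(G')$. By symmetry, all vertices of $R$ get the same entry $a>0$. Moreover, every vertex of $R$ is adjacent to everything, so its neighbourhood contains that of any $u\in V(F)$ (apart from itself). Comparing eigen-equations therefore gives $x_u\le a$ for all $u\in V(F)$.

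Write $X:=\sum_{u\in V(F)}x_u$. The eigen-equation at a vertex of $R$ gives $\rho a=(r-1)a+X$. Summing the eigen-equations over $u\in V(F)$ gives
\[
\rho X=r(n-r)a+\sum_{u\in V(F)}d_F(u)x_u\le r(n-r)a+2ma\le \big(r(n-r)+(s-r)(n-s)\big)a .
\]
Substituting $X=(\rho-r+1)a$ and dividing by $a>0$ yields $g(\rho)\le0$, where
\[
g(t):=t(t-r+1)-r(n-r)-(s-r)(n-s).
\]

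Now evaluate $g$ at $\lambda$ using $\lambda^2=(s-1)\lambda+s(n-s)$:
\[
g(\lambda)=(s-r)\lambda+s(n-s)-r(n-r)-(s-r)(n-s)=(s-r)(\lambda-r)>0,
\]
since $\lambda\ge s>r$. The function $g$ is increasing for $t\ge (r-1)/2$, and $\rho\ge r-1\ge (r-1)/2$ because $G'\supseteq K_r$. Hence $g(\rho)\le 0<g(\lambda)$ forces $\rho<\lambda$, contradicting $\rho(G)>\lambda$.

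The main step I expect to need care is the domination inequality $x_u\le a$. It follows from $\rho(a-x_u)=\sum_{w\in N(v)}x_w-\sum_{w\in N(u)}x_w\ge x_u-a$ for $v\in R$, hence $(\rho+1)(a-x_u)\ge0$. The remaining verifications are routine algebra.
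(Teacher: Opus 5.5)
Your proof is correct, and its skeleton matches the paper's. You dispose of $r>s$, handle $r=0$ by Stanley's bound (Theorem~\ref{edgespec}), and for $r\ge 1$ pass to the join $K_r\vee(G-R)$.

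The difference is in the key step. The paper cites the Hong--Shu--Fang/Nikiforov bound (Theorem~\ref{hong02}) with $r$ in place of the minimum degree, then squares out the resulting radical inequality. You instead derive the needed inequality $\rho(\rho-r+1)\le r(n-r)+2e(G-R)$ directly from the Perron vector, using the domination $x_u\le a$. This is exactly what Theorem~\ref{hong02} gives with $\delta=r$, because
\[
(r+1)^2+4\big(2e(H)-rn\big)-(r-1)^2=4\big(r(n-r)+2e(G-R)\big).
\]
So you are in effect reproving that theorem in the one special case where it is easy, since the vertices of $R$ dominate everything.

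What your route buys:
\begin{itemize}
\item It is self-contained.
\item It avoids a point the paper leaves implicit: substituting $r\le\delta(H)$ for $\delta$ in Theorem~\ref{hong02} is legitimate only because that bound is nonincreasing in $\delta$.
\item Your finish via $g(\lambda)=(s-r)(\lambda-r)$ is cleaner than the paper's squaring.
\end{itemize}
Your separate treatment of $r=s$ via $K_s\vee\overline{K_{n-s}}$ is fine but not needed. When $r=s$ you have $g(\lambda)=0$, and the same monotonicity argument already gives $\rho\le\lambda$.

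One correction for $r=0$: your remark that the main argument ``goes through verbatim'' is not right, since there is no vertex of $R$ to supply $a$. Use the Stanley route. The comparison it needs is $\sqrt{1+X}-1\le\sqrt{X}\le\sqrt{(s-1)^2+X}+(s-1)$ with $X=4s(n-s)$.
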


\begin{proof}
Fix a subset $R \subseteq V(G)$ such that $|R| = r$. If $r> s$, then $(s - r)(n - s)<0$, and the conclusion is straightforward since $e(G - R)\ge0$. If $r=0$ and $e(G)\le s(n-s)/2$, by Theorem~\ref{edgespec}, we have $$\rho(G)\le \frac{-1+\sqrt{1+4s(n-s)}}{2}
<\frac12\Big(s-1+\sqrt{(s-1)^2+4s(n-s)}\Big),$$ a contradiction. Thus, we consider that $1\leq r\leq s$ in the following.
Let $G' = G - R$, and let $H$ be the graph obtained from $G'+R$ by adding all possible edges in $R$. In other words, $H$ is the join of a complete graph $K_r$ and $G'$, where the vertex set of $K_r$ is $R$, i.e., $H = K_r+G'$. 
 Therefore,
\begin{equation}\label{eq:rhoH-lower}
 \rho(H)\ge \rho(G)>
 \frac{1}{2}\Big(s-1+\sqrt{(s-1)^2+4s(n-s)}\Big).
\end{equation}

Note that the minimum degree of the graph $H$ is at least $r$. Applying Theorem~\ref{hong02}, we get
\begin{align}\label{eq4}
\rho(H)\leq
\frac{1}{2}\left(r - 1+\sqrt{(r + 1)^2+4(2e(H)-rn)}\right).
\end{align}
Since
\[
e(H)=e(G')+\binom{r}{2}+r(n - r).
\]
we have
\begin{align*}
2e(H)-rn=2e(G')+r(n - r - 1).
\end{align*}
Combining this with \eqref{eq:rhoH-lower} and \eqref{eq4}, we can deduce that
\begin{equation}\label{eq:sqrt-ineq}
\sqrt{(r + 1)^2+4\left(2e(G')+r(n - r - 1)\right)}
>
s - r+\sqrt{(s - 1)^2+4s(n - s)}.
\end{equation}
Both sides of (\ref{eq:sqrt-ineq}) are positive. Squaring and simplifying,
we obtain
\begin{align*}
8e(G')
&>(s-r)^2+(s-1)^2+4s(n-s)+2(s-r)\sqrt{(s-1)^2+4s(n-s)}\\
&\qquad-\Big((r+1)^2+4r(n-r-1)\Big)\\
&=4(s-r)(n-s)+2(s-1)(s-r)-4r(s-r)+2(s-r)\sqrt{(s-1)^2+4s(n-s)}\\
&\geq 4(s-r)(n-s)+2(s-1)(s-r)-4r(s-r)+2(s-r)(s+1)\\
&=4(s-r)(n-s)+4(s-r)^2\\
&\geq 4(s-r)(n-s),
\end{align*}
where the second inequaliy holds since $n\geq s+1$ and $r\le s$.

Hence we have
\begin{align*}
e(G')> \frac{1}{2}(s-r)(n-s). 
\end{align*}    
This completes the proof. 
\end{proof}

\begin{lemma}[Chernoff Bound]\label{chernoff}
Suppose $X_1,\ldots, X_n$ are independent random variables taking values in $\{0, 1\}$. Let
$X=\sum_{i=1}^n X_i$ and $\mu = \mathbb{E}[X]$. Then for any $0 < \delta \leq 1$,
\[
\mathbb{P}[X \geq (1+\delta) \mu] < e^{-\delta^2\mu/3}
\quad\text{and}\quad
\mathbb{P}[X \leq (1-\delta) \mu] < e^{-\delta^2 \mu/2}.
\]
In particular, when $X \sim \operatorname{Bi}(n,p)$ and $\lambda<\frac{3}{2}np$,  then
\[
\mathbb{P}(|X-np|\geq \lambda)\leq e^{-\Omega(\lambda^2/np)}.
\]
\end{lemma}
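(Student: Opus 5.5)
This is the classical Chernoff bound, and I would prove it by the exponential moment method. Write $p_i=\mathbb{P}[X_i=1]$, so $\mu=\sum_i p_i$. For the upper tail, fix $t>0$ and apply Markov's inequality to $e^{tX}$:
\[
\mathbb{P}[X\ge (1+\delta)\mu]\le e^{-t(1+\delta)\mu}\,\mathbb{E}[e^{tX}]
= e^{-t(1+\delta)\mu}\prod_{i=1}^n\big(1+p_i(e^t-1)\big)\le e^{-t(1+\delta)\mu}e^{(e^t-1)\mu}.
\]
The equality uses independence, and the last step uses $1+x\le e^x$. Choosing $t=\ln(1+\delta)$ gives the bound $\big(e^{\delta}/(1+\delta)^{1+\delta}\big)^{\mu}$. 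It then suffices to show $\delta-(1+\delta)\ln(1+\delta)\le -\delta^2/3$ for $0<\delta\le 1$.

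For that inequality, set $f(\delta)=\delta-(1+\delta)\ln(1+\delta)+\delta^2/3$. Then $f(0)=0$ and $f'(\delta)=-\ln(1+\delta)+2\delta/3$. Since $f'(0)=0$ and $f''(\delta)=-1/(1+\delta)+2/3$, the derivative $f'$ first decreases (for $\delta<1/2$) and then increases. As $f'(1)=2/3-\ln 2<0$, we get $f'\le 0$ on $[0,1]$, hence $f\le 0$. Strictness of the bound comes from $1+x<e^x$ for $x\neq 0$, assuming $\mu>0$.

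The lower tail is symmetric. Apply Markov to $e^{-tX}$ with $t>0$, bound $\mathbb{E}[e^{-tX}]\le e^{(e^{-t}-1)\mu}$, and take $t=-\ln(1-\delta)$. This gives $\big(e^{-\delta}/(1-\delta)^{1-\delta}\big)^{\mu}$. The needed inequality $-\delta-(1-\delta)\ln(1-\delta)\le -\delta^2/2$ follows from the expansion $(1-\delta)\ln(1-\delta)\ge -\delta+\delta^2/2$, which holds because all higher-order terms of that series are nonnegative. The case $\delta=1$ follows by continuity, or directly from $\mathbb{P}[X=0]=\prod_i(1-p_i)\le e^{-\mu}$.

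For the ``in particular'' statement, take $\mu=np$ and $\delta=\lambda/(np)<3/2$. If $\delta\le 1$, both tails are at most $e^{-\delta^2\mu/3}=e^{-\lambda^2/(3np)}$. If $1<\delta<3/2$, the lower tail is empty, since $X\ge 0>np-\lambda$. For the upper tail, the function $g(\delta)=\delta-(1+\delta)\ln(1+\delta)$ is decreasing, so $g(\delta)\le g(1)<-1/3$. Because $\delta^2<9/4$, this gives $g(\delta)\le -c\,\delta^2$ for an absolute constant $c>0$. Together these give $e^{-\Omega(\lambda^2/np)}$. The only step needing real care is the elementary calculus inequality on $[0,1]$ and its extension to $\delta<3/2$; the rest is routine.
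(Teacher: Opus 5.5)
The paper gives no proof of this lemma; it quotes it as a standard tool. Your exponential-moment argument supplies the usual textbook proof, and it is correct. The following steps all check out:
\begin{itemize}
\item the moment-generating-function bounds with $t=\ln(1+\delta)$ and $t=-\ln(1-\delta)$;
\item the sign analysis of $f'$ via $f''$, using $f'(1)=2/3-\ln 2<0$;
\item the series identity $(1-\delta)\ln(1-\delta)=-\delta+\sum_{k\ge 2}\delta^k/(k(k-1))$;
\item the extension to $1<\delta<3/2$ through $g(\delta)\le g(1)=1-2\ln 2<-1/3$, which yields the explicit constant $4/27$.
\end{itemize}

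Two precision points concern the statement rather than your reasoning.

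First, the strict inequalities genuinely require $\mu>0$, as you noted.

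Second, in the ``in particular'' part the union bound gives $\mathbb{P}(|X-np|\ge\lambda)\le 2e^{-4\lambda^2/(27np)}$. Your phrase ``together these give'' silently absorbs the factor $2$. This cannot be avoided: if $np\notin\mathbb{Z}$ and $\lambda\to 0^+$, the left-hand side equals $1$, while $e^{-c\lambda^2/np}<1$ for every $c>0$. So the $\Omega$-bound must be read with a constant prefactor, or for $\lambda^2/np$ bounded below. That is the regime in which such concentration bounds are actually applied.
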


The next theorem is the Frankl–R\"odl ``nibble'' lemma, which we use to obtain almost perfect
matchings in a $k$-graph under suitable degree and codegree conditions.

\begin{thm}[\cite{FR85}]\label{nibble}
    For every integer $k\ge 2$ and any real $\sigma >0$, there exist $\tau = \tau(k,\sigma)$ and
    $d_0 = d_0(k,\sigma)$ such that for every $n\ge D\ge d_0$ the following holds:
    Every $n$-vertex $k$-graph $H$ with $(1-\tau)D<d_H(v)< (1+\tau)D$ for any $v\in V(H)$ and
    $\Delta_2(H)<\tau D$ contains a matching covering all but at most $\sigma n$ vertices.
\end{thm}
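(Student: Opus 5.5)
The plan is to prove Theorem~\ref{nibble} by the Rödl nibble: build the matching in a bounded number of random rounds. Each round takes a small random bite of edges, keeps the ones that are isolated, and deletes the vertices they cover. We then check that what remains is again nearly regular with small codegree, so that the next round can be run on it.

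\textbf{Nibble round.} Fix a small $\varepsilon=\varepsilon(k,\sigma)>0$ and a number of rounds $t=t(k,\sigma)$ with $e^{-\varepsilon t}<\sigma/2$. Suppose the current $k$-graph $H_i$ satisfies $d(v)=(1\pm\tau_i)D_i$ for all but a set $B_i$ of ``bad'' vertices, and $\Delta_2(H_i)<\tau_i D_i$.
\begin{itemize}
\item Select each edge independently with probability $\varepsilon/D_i$.
\item Let $M_i$ be the set of selected edges that meet no other selected edge.
\item Set $H_{i+1}=H_i-V(M_i)$.
\end{itemize}
Each vertex is covered by a selected edge with probability about $1-e^{-\varepsilon}$. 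The small codegree makes selections at distinct edges through $v$ almost independent, so the isolation condition costs only $O(\varepsilon^2)$. Hence a vertex survives with probability $p=e^{-\varepsilon}+O(\varepsilon^2)$. An edge survives when its other $k-1$ vertices all survive, and the codegree bound again makes these events nearly independent. So
\[
\mathbb E[d_{H_{i+1}}(v)]=(1\pm O(\tau_i+\varepsilon^2))\,p^{k-1}D_i .
\]
Codegrees can only decrease, so $\Delta_2(H_{i+1})<\tau_iD_i$. We put $D_{i+1}=p^{k-1}D_i$ and $\tau_{i+1}=C(\tau_i+\varepsilon^2)$ for a constant $C$ depending on $k$.

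\textbf{Concentration (the main obstacle).} The degree $d_{H_{i+1}}(v)$ is a function of the independent edge selections, but it is not a sum of independent indicators. Changing the selection of one edge can only affect edges sharing a vertex with it. Using $\Delta_2<\tau_i D_i$, the effect of one coordinate on $d_{H_{i+1}}(v)$ is $O(k^2)$, and only $\mathrm{poly}(k)\,D_i$ coordinates are relevant to $v$ in expectation. I would therefore apply a bounded-differences or Talagrand-type inequality, in the spirit of Lemma~\ref{chernoff}. This gives deviation probability $\exp(-D_i^{c})$ for deviations of order $\tau_iD_i$. The same argument gives concentration of the number of surviving vertices in $V(H_i)$.

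Since $n$ may be much larger than $D$, a union bound over all vertices is not available. Instead I would not require every vertex to be good. Vertices whose degrees deviate join $B_{i+1}$, and the expected size of $B_{i+1}$ is at most $n\exp(-D_i^c)$. Markov's inequality then shows that with positive probability the bad set stays below $\sigma n/4$ in total over all rounds, provided $d_0$ is large. Edges at bad vertices are simply ignored afterwards: bad vertices are not counted as covered, and they are removed from consideration. This only perturbs the degrees of other vertices slightly, since each vertex has few bad neighbours in expectation. If needed, a second Markov application discards the vertices with many bad neighbours.

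\textbf{Iteration and choice of constants.} After $t$ rounds the number of uncovered good vertices is about $p^tn\le(e^{-\varepsilon}+O(\varepsilon^2))^tn<\sigma n/2$. Together with at most $\sigma n/4$ bad vertices, this leaves at most $\sigma n$ vertices uncovered by $\bigcup_i M_i$, which is a matching by construction. For the error terms I choose $\varepsilon$ first and then $t$. Next I pick $\tau$ so small that $\tau_t=C^t(\tau+t\varepsilon^2)$ stays below a threshold making the expectation estimates valid throughout; this needs $\varepsilon^2C^t t$ to be small. That is arranged by taking $\varepsilon$ small relative to the loss per round, in the standard way where $\varepsilon t$ is fixed and the accumulated $O(\varepsilon^2)$ errors are summed. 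Finally $d_0$ is chosen so that $D_t=p^{(k-1)t}D\ge p^{(k-1)t}d_0$ is still large enough for the concentration bounds. The delicate step is the concentration together with the bookkeeping of bad vertices. The expectation computations are routine given the codegree bound.
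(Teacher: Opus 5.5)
The paper gives no proof of this statement; it imports it from Frankl and R\"odl, and the nibble you describe is the standard route to it. Your overall architecture is sound: random bites, keeping isolated edges, and tolerating a small bad set controlled by Markov's inequality. However, the two steps you flag as delicate both fail as written.

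\emph{Concentration.} Flipping the selection of one edge changes $V(M_i)$ by at most $k^2$ vertices. But each such vertex $u$ lies in $d(u,v)$ edges through $v$, and your hypothesis only bounds this by $\Delta_2<\tau_iD_i$. So the Lipschitz constant of $d_{H_{i+1}}(v)$ is of order $k^2\tau_iD_i$, not $O(k^2)$; the latter holds only for $k=2$.

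Moreover, no tail bound of the form $\exp(-D_i^{c})$ can hold for $k\ge 3$. Suppose the edges at $v$ all pass through a set $A$ of about $2/\tau_i$ vertices, each of codegree about $\tau_iD_i/2$ with $v$; your inductive hypothesis allows this. Then $d_{H_{i+1}}(v)$ is essentially proportional to the number of surviving vertices of $A$, which is roughly binomial. Its relative fluctuations are therefore of order $\sqrt{\varepsilon\tau_i}\gg\tau_i$ with probability bounded away from $0$, uniformly in $D$. So deviations of order $\tau_iD_i$ are typical, and the degree tolerance cannot be the same parameter as the codegree ratio.

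What your Markov argument actually needs is only that the per-vertex failure probability be a small constant. A second-moment computation gives $\mathrm{Var}(d_{H_{i+1}}(v))=O_k(\varepsilon\Delta_2D_i+D_i)$. Chebyshev then bounds the probability of a deviation $sD_i$ by $O_k\big(\varepsilon\Delta_2/(s^2D_i)+1/(s^2D_i)\big)$. This is small provided you keep the codegree bound at its original absolute value $\tau D$ (it never grows) and choose $\tau$ after the tolerance $s$.

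\emph{Constants.} The recursion $\tau_{i+1}=C(\tau_i+\varepsilon^2)$ with $C=C(k)>1$, run for $t\ge\ln(2/\sigma)/\varepsilon$ rounds, gives
$\tau_t\ge C^t t\varepsilon^2\ge\varepsilon\ln(2/\sigma)\,C^{\ln(2/\sigma)/\varepsilon}$.
This tends to infinity as $\varepsilon\to0$, so shrinking $\varepsilon$ makes matters worse, not better. Since a round changes only an $O(\varepsilon)$-fraction of the structure, the loss per round must be a factor $1+O_k(\varepsilon)$, not a constant $C$.

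The additive $\varepsilon^2$ is also an artefact of approximating the survival probability by $e^{-\varepsilon}$. In your variant the events that $u$ lies in a given isolated selected edge are mutually exclusive. Hence a vertex of degree $(1\pm\delta_i)D_i$ with a good neighbourhood is removed with probability $\varepsilon e^{-k\varepsilon}(1\pm O_k(\delta_i+\varepsilon\Delta_2/D_i))$. Taking $p:=1-\varepsilon e^{-k\varepsilon}$ then gives
$\delta_{i+1}\le(1+O_k(\varepsilon))\delta_i+O_k(\varepsilon\Delta_2/D_i)+s$.
Over $t=O(\ln(1/\sigma)/\varepsilon)$ rounds this costs only a factor $e^{O_k(\varepsilon t)}$, which depends on $k$ and $\sigma$ alone. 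Equivalently, since $t$ is fixed once $\varepsilon$ is, you can choose the tolerances backwards, as in the Alon--Spencer treatment of Pippenger's theorem.
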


A \textit{fractional matching} in a 3-graph $H$ is a function 
$f:E(H)\to[0,1]$ such that $\sum_{e\ni v} f(e)\le 1$ for every 
$v\in V(H)$. The size of this fractional matching is given by $\sum_{e\in E(H)} f(e)$. We denote the maximum size of a fractional matching in 
$H$  as  $\nu^*(H)$, which is referred to as the fractional matching number of $H$. A fractional matching $f$ of $H$ is called \textit{perfect fractional matching} if   $\sum_{e\ni v} f(e)= 1$ for every 
$v\in V(H)$. 

The following theorem presented by Aharoni, Holzman, and Jiang helps to control the number of edges utilized in a perfect fractional matching.  

\begin{thm}[\cite{AHJ}]\label{rainfracmat}
    Let $r\ge 2$ be an integer, and let $n$ be a positive rational number. Let
    $H_1,\dots,H_{\lceil rn\rceil}$ be $r$-graphs such that $\nu^*(H_i)\ge n$ for
    $i=1,\dots,\lceil rn\rceil$. Then there exist $e_1\in H_1,\dots,e_{\lceil rn\rceil}\in
    H_{\lceil rn\rceil}$ such that $\left\{e_1,\dots,e_{\lceil rn\rceil}\right\}$ has a fractional
    matching of size $n$.
\end{thm}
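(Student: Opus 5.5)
The statement is Theorem~\ref{rainfracmat} of Aharoni, Holzman and Jiang. It is quoted from \cite{AHJ}, so within the paper it is used as a black box. If I had to prove it, I would argue by LP duality together with a topological or colorful-KKM style argument, in the spirit of the rainbow fractional matching results. My plan is as follows.

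Put $N=\lceil rn\rceil$. I want to pick one edge from each $H_i$ so that the picked edges support a fractional matching of size $n$. First I reduce to a statement about fractional covers. By LP duality, a multiset $F=\{e_1,\dots,e_N\}$ has $\nu^*(F)\ge n$ if and only if every fractional cover $w:V\to\mathbb{R}_{\ge0}$ of $F$ satisfies $\sum_v w(v)\ge n$. Call $w$ \emph{good for} $H_i$ if $w$ is not a fractional cover of $H_i$, i.e.\ some $e\in H_i$ has $w(e)<1$. The hypothesis $\nu^*(H_i)\ge n$ says exactly that every $w$ with total weight less than $n$ is good for every $H_i$.

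The core step is a colorful fixed-point argument on the simplex $\Delta=\{w\ge0:\sum_v w(v)=t\}$ with $t$ slightly below $n$. For each $i$, the set of $w\in\Delta$ that are good for $H_i$ covers $\Delta$, and each "good" region is a union of open sets of the form $U_e=\{w:w(e)<1\}$. I would apply the colorful KKM theorem, or equivalently a Sperner/degree argument on a triangulation of $\Delta$ into $|V|$ colors, to find one point $w^*$ together with edges $e_i\in H_i$ satisfying $w^*(e_i)\le 1$. Complementary slackness should then say that the family $\{e_i\}$ has no fractional cover of weight below $n$.

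The remaining task is to get exactly $N=\lceil rn\rceil$ edges, and this counting step is where the factor $r$ enters. A basic optimal solution of the fractional matching LP on the chosen edges has at most $|V|$ nonzero variables, which is too many in general. The right count instead comes from the support of the dual optimum. Suppose that support is a set $S$ of vertices with all weights positive. Each chosen edge meets $S$ in at most $r$ vertices, and the primal solution must be tight at every vertex of $S$. Adding up these tight constraints gives $n\cdot r\ge$ (weight at the vertices of $S$). From this I would deduce that $\lceil rn\rceil$ colors suffice, and the argument is completed by induction or perturbation when $n$ is not an integer.

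I expect the main obstacle to be setting up the topological lemma so that it produces one edge per color $i$ rather than per vertex, and making the number of colors match $\lceil rn\rceil$. The first thing I would try is a cone or join construction: build the simplex over $N$ color classes and use the "$r$-uniform" incidence bound at the fixed point. In practice, though, I would simply cite \cite{AHJ}, as the paper does.
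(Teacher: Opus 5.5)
Citing \cite{AHJ} is exactly what the paper does, since it contains no proof of this statement, so as a black-box citation your proposal is fine. The argument you sketch, however, would not prove the theorem.

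\textbf{The topological step.} Its output is vacuous. Since $\tau^*(H_i)=\nu^*(H_i)\ge n>t$, no $w$ of total weight $t$ is a fractional cover of any $H_i$. So for \emph{every} $w\in\Delta$ and every $i$ there is already an edge $e_i\in H_i$ with $w(e_i)<1$, and no KKM or Sperner argument is needed to produce such a point and such edges.

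Colorful KKM also does not apply to your sets. Its covers are indexed by the vertices of the simplex (here the elements of $V$). They must satisfy the face condition: weightings supported on $T$ lie in the union of the sets indexed by $T$, or by subsets of $T$ in the KKMS form. Your $U_e$ is indexed by an edge and satisfies the reverse kind of containment: it contains every weighting supported on $V\setminus e$. Moreover, a colorful theorem on the simplex over $V$ consumes $|V|$ colors, not $\lceil rn\rceil$.

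\textbf{The complementary slackness step.} This is a non sequitur. A single $w^*$ with $w^*(e_i)\le 1$ says nothing about other weightings. If $w^*(e_i)=1$ for all $i$, then $w^*$ is itself a fractional cover of the chosen edges of weight $t<n$, which certifies the opposite of what you want. Concretely, take $r=2$, $n=2$, each $H_i$ the complete graph on a large $V$, the star $e_i=\{a,b_i\}$ and the uniform $w^*$. These satisfy your conclusion, yet $\nu^*(\{e_i\})=1$.

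A working argument must output a primal certificate tying the chosen edges together: coefficients $\lambda_i\ge 0$ with $\sum_i\lambda_i\chi_{e_i}\le\mathbf 1$ and $\sum_i\lambda_i\ge n$. That is what the balanced-family conclusion of KKMS/Komiya-type theorems supplies; a common point of a cover does not.

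\textbf{The count $\lceil rn\rceil$.} This is not obtained. Summing tight constraints only gives $|S|=\sum_e f(e)|e\cap S|\le rn$ for the dual support $S$, which says nothing about the number of colors needed. The count has to be built into the topological setup.

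For example, suppose one uses a colorful KKMS-type theorem on the simplex over $V$, which needs $|V|$ colors. Then the $|V|-\lceil rn\rceil$ surplus colors must be restricted to sets contributing only slack, such as singletons. Balancedness $\sum_S\lambda_S\chi_S=\mathbf 1$ then forces $r\sum_e\lambda_e=|V|-\sum_v\lambda_{\{v\}}\ge\lceil rn\rceil$ on the edges chosen by the genuine colors. You flag this as the main obstacle yourself, and it remains unresolved, so the sketch is a heuristic rather than a proof.
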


The main structural ingredient in the proof of Theorem~\ref{thm1.1} is the following absorbing lemma
for 3-graphs whose link graphs all have large spectral radii.

\begin{lemma}\label{absorbing}
    Let $n$ be a sufficiently large integer with $n\equiv 0\pmod{3}$. Let $\eta$ and $\varepsilon$ be constants satisfying $0<1/n\ll\eta\ll\varepsilon\ll\gamma$. Suppose $H$ is a 3-uniform hypergraph on vertex set $[n]$ such that $\rho\bigl(N_H(v)\bigr)>(2/3+\gamma)n$ for every vertex $v\in V(H)$. Then $H$ contains a matching $M$ of
    size at most $4\varepsilon n$ such that for any subset $S\subseteq V(H)\setminus V(M)$ with
    $\left|S\right|\le \eta n$ and $|S|\equiv 0\pmod 3$, $H[S\cup V(M)]$ contains a
    perfect matching.
\end{lemma}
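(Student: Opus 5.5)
We will follow the standard absorbing method of Rödl–Ruciński–Szemerédi, in the form used by Hàn–Person–Schacht, with Lemma~\ref{commneig} as the main new ingredient. For a triple $T=\{x,y,z\}\subseteq V(H)$ (and, since $|S|\equiv 0\pmod 3$, it suffices to absorb triples), call a $6$-set $A=\{a_1,a_2,a_3,b_1,b_2,b_3\}$ disjoint from $T$ an \emph{absorbing set} for $T$ if $H[A]$ has a perfect matching, e.g.\ $\{a_1a_2a_3,\,b_1b_2b_3\}$, and $H[A\cup T]$ also has one, e.g.\ $\{xa_2a_3,\,yb_1b_2,\,za_1b_3\}$ or a similar pattern. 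The aim is to show that every triple $T$ has at least $c n^{6}$ absorbing $6$-sets, with $c=c(\gamma)>0$ polynomial in $\gamma$ (something like $\gamma^{8}$). Given this, we choose a random family $\mathcal F$ of $6$-sets by including each one independently with probability $p=\varepsilon n^{-5}$. By Lemma~\ref{chernoff} and Markov's inequality, with positive probability $|\mathcal F|\le 2\varepsilon n$, the number of intersecting pairs in $\mathcal F$ is at most $O(\varepsilon^2 n)$, and every triple $T$ has at least $c\varepsilon n/2$ absorbing sets in $\mathcal F$. We then delete one member of each intersecting pair and every set without a perfect matching. The remaining sets are disjoint, and we let $M$ be the union of their internal perfect matchings, so $|M|\le 2\cdot 2\varepsilon n=4\varepsilon n$. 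Since $\varepsilon\ll\gamma$ makes $O(\varepsilon^2 n)\le c\varepsilon n/4$, each triple still has at least $c\varepsilon n/4$ absorbers left. For $S$ with $|S|\le \eta n$, we split $S$ into $|S|/3\le \eta n/3<c\varepsilon n/4$ triples and absorb them greedily with distinct absorbers, which works because $\eta\ll\varepsilon$.

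The counting step is the heart of the argument, and here Lemma~\ref{commneig} replaces the degree-based counting. For any two vertices $u,w$, the hypothesis gives $\rho(N_H(u))+\rho(N_H(w))>(4/3+2\gamma)n$. After deleting $u,w$ to place both link graphs on a common vertex set, the sum is still at least $(4/3+\gamma)n$, since deleting a vertex lowers $\rho$ by at most $1$. Lemma~\ref{commneig} then yields at least $\gamma^2n^2/2$ pairs $e$ with $e\cup\{u\}$ and $e\cup\{w\}$ both in $E(H)$. In other words, any two vertices are \emph{swappable} through $\Omega(\gamma^2 n^2)$ common link edges.

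To build absorbers for $T=\{x,y,z\}$, we first pick an edge $\{x,a_2,a_3\}$, which gives $\Omega(n^2)$ choices since $\rho(N_H(x))>2n/3$ forces $e(N_H(x))=\Omega(n^2)$ by Theorem~\ref{edgespec}. Next we pick $a_1$ together with a pair $\{b_1,b_2\}$ in the common link of $y$ and another vertex $b_3$, chosen so that the pieces recombine. A cleaner alternative is a two-step construction: use the common-link property for the pair $(x,a_1)$ to obtain $\{a_2,a_3\}$ with both $xa_2a_3,a_1a_2a_3\in E(H)$, and similarly for $(y,b_1)$ and $(z,c_1)$. This suggests absorbers of size $9$ rather than $6$. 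Concretely, take an edge $a_1b_1c_1$ (which contributes $\Omega(n^3)$ choices), then disjoint pairs $P_x\in N(x)\cap N(a_1)$, $P_y\in N(y)\cap N(b_1)$ and $P_z\in N(z)\cap N(c_1)$. The $9$-set $\{a_1,b_1,c_1\}\cup P_x\cup P_y\cup P_z$ has the perfect matching $\{a_1b_1c_1,\ P_y\cup b_1?\}$; correcting this, the internal matching is $\{P_x a_1,P_y b_1,P_z c_1\}$, and together with $T$ the set has the matching $\{P_x x,P_y y,P_z z,a_1b_1c_1\}$. That gives $\Omega(\gamma^6 n^9)$ absorbing $9$-sets, and the random selection above runs unchanged with $p=\varepsilon n^{-8}$, still giving $|M|\le 4\varepsilon n$ after adjusting constants. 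Disjointness costs only $O(n)$ per pair, which is negligible.

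We expect the main obstacle to be making the counting robust: the common-link pairs for the three vertex pairs must avoid the already chosen vertices, and all the polynomial losses in $\gamma$ must be tracked so that they beat the $\varepsilon,\eta$ hierarchy. The size bound $|M|\le 4\varepsilon n$ also needs care, since it is stated for $6$-sets. If it becomes tight we will shrink $p$ accordingly, which the hierarchy allows.
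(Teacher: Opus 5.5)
Your final plan, with $9$-vertex absorbers, is correct. It follows the paper's absorbing scheme: Lemma~\ref{commneig} supplies $\Omega(\gamma^2n^2)$ common link pairs for any two vertices, then random selection of candidate sets, deletion of overlapping and non-absorbing ones, and greedy absorption of triples. Only the gadget differs.

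The paper does find a $6$-vertex absorber, which your abandoned first attempt lacked:
\begin{itemize}
\item For $T=\{u_1,u_2,u_3\}$, take a link edge $U_1=\{v_1,v_2\}\in N_H(u_1)$, then disjoint pairs $U_2\in N_H(u_2)\cap N_H(v_1)$ and $U_3\in N_H(u_3)\cap N_H(v_2)$.
\item $A=U_1\cup U_2\cup U_3$ has the matching $\{v_1\}\cup U_2,\ \{v_2\}\cup U_3$.
\item $A\cup T$ has the matching $\{u_i\}\cup U_i$ for $i\in[3]$.
\end{itemize}
Thus $u_1$ is absorbed directly by one of its own link edges, and only two swaps are needed, giving $\Omega(\gamma^4n^6)$ absorbers.

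Your gadget (an edge $a_1b_1c_1$ avoiding $T$ plus three swap pairs) spends one extra swap and yields $\Omega(\gamma^6n^9)$ absorbers. That loss is harmless under $\varepsilon\ll\gamma$. The size bound is also fine: with $p=\varepsilon n^{-8}$ you have $\mathbb{E}|\mathcal F|\le \varepsilon n/9!$, so $|M|\le 3|\mathcal F|\le 4\varepsilon n$ with room to spare.

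One justification in your proposal is false. Deleting a vertex can lower the spectral radius by far more than $1$: removing the centre of $K_{1,m}$ drops $\rho$ from $\sqrt m$ to $0$. The conclusion you need still holds. Deleting $w$ from $N_H(u)$ removes a star with at most $n-2$ edges, so by subadditivity of the largest eigenvalue the drop is at most $\sqrt n=o(n)$.

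A cleaner fix avoids deletion altogether. Add $u$ as an isolated vertex to $N_H(u)$ and $w$ as an isolated vertex to $N_H(w)$. Both graphs then live on $V(H)$ with unchanged spectral radii. Lemma~\ref{commneig} with $2\gamma$ in place of $\gamma$ gives $2\gamma^2n^2$ common pairs, which automatically avoid $u$ and $w$. This is implicitly how the paper applies the lemma.
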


\begin{proof}
   According to Theorem~\ref{edgespec}, we can conclude that $\delta_1(H)\geq \binom{2n/3}{2}$. By applying Lemma~\ref{commneig}, for any two vertices $u$ and $v$ in the vertex set $V(H)$ of the hypergraph $H$, the following inequality holds:
\begin{align}\label{commne}
    \left|N_H(u)\cap N_H(v)\right|\geq 2\gamma^2n^2.
\end{align}
A 6-element subset $A$ of the vertex set $V(H)$ (i.e., $A\in \binom{V(H)}{6}$) is defined as an absorbing set for a 3-element subset $T$ of $V(H)$ (i.e., $T\in\binom{V(H)}{3}$) if the subhypergraph $H[A]$ has a matching of size 2 and the subhypergraph $H[A\cup T]$ has a matching of size 3. For each 3-element subset $T$ of $V(H)$, we use $\mathcal{L}(T)$ to denote the collection of all absorbing sets for $T$. Then, the following results can be obtained: 

\medskip
\textbf{Claim 1.~}$\left|\mathcal{L}(T)\right|\ge \gamma^4\binom{n}{6}$ for every $T\in\binom{V(H)}{3}$.
\medskip

    Let $T=\left\{ u_1,u_2,u_3\right\}$. Since $\delta_1(H)\ge \binom{2n/3}{2}$, there are at least
    \[
    \delta_1(H)-2n>n^2/5
    \]
    2-sets in $N_H(u_1)$  that lie entirely within $V(H)\setminus T$.

    Fix one such 2-set  $U_1:=\left\{v_1,v_2\right\}$, where $v_1,v_2\in V(H)\setminus T$. Next, select a 2-set $U_2$ from $V(H)\setminus (T\cup U_1)$ such that both $U_2\cup \{u_2\}$ and
    $U_2\cup \{v_1\}$ are both edges of $H$. By (\ref{commne}),
    $|N_{H}(u_2)\cap N_{H}(v_1)|\geq  2\gamma^2n^2$.  Note that the number of edges in $H$
    containing $u_2$ and intersecting $(T\cup U_1)\setminus \{u_2\}$ is at most $4n$. Thus there
    are at least
    \[
    2\gamma^2n^2- 4n>  \gamma^2n^2
    \]
    valid choices for $U_2$.

    Finally, we choose a 2-set $U_3$ from $V(H)\setminus (T\cup U_1\cup U_2)$ such that both $U_3\cup
    \{u_3\}$ and $U_3\cup \{v_2\}$ are both edges of $H$. Since
    $|N_{H}(u_3)\cap N_{H}(v_2)|\geq  2\gamma^2n^2$, there are at least
    \[
    2\gamma^2n^2-6 n>  \gamma^2n^2
    \]
  valid choices for $U_3$.

    Note that both $H[U_1\cup U_2\cup U_3]$ and $H[T\cup U_1\cup U_2\cup U_3]$ contain a perfect
    matching. Thus, there are more than $( \gamma^2n^2)^2n^2/5> \gamma^4\binom{n}{6}$ absorbing
    sets for $T$. This complete the proof of Claim 1.

    \medskip

    Now we obtain a family $\mathcal{M}$ of subsets of $V(H)$ by selecting each 6-set uniformly and
    independently with probability
    \[
    p=\frac{\varepsilon n}{\binom{n}{6}}.
    \]

    Then by the Chernoff bound (Lemma~\ref{chernoff}), with probability $1-o(1)$ the following
    holds:

   \begin{itemize}
  \item [(i)]  $\left|\mathcal{M}\right|\le 2\varepsilon n$, and

  \item [(ii)] $\left|\mathcal{L}(T)\cap \mathcal{M}\right|\ge p\left|\mathcal{L}(T)\right|/2\ge
    \varepsilon\gamma^4n/2$ for every $T\in \binom{V(H)}{3}$.
    \end{itemize}

    \medskip
    Furthermore, the expected number of intersecting pairs of sets in $\mathcal{M}$ is at most
    \[
    \binom{n}{6}\binom{6}{1}\binom{n}{5}p^2<\varepsilon^{1.5}n.
    \]
    Thus, by Markov's inequality, we derive that, with probability at least $1/2$,

    \begin{itemize}
  \item [(iii)] $\mathcal{M}$ contains at most $2\varepsilon^{1.5}n$ intersecting pairs. 
\end{itemize}
 
    Therefore, with positive probability, the matching $\mathcal{M}$ satisfies (i), (ii) and (iii). Let
    $\mathcal{M}_0$ be derived from  $M$
 in the following way: we remove one set from each pair of intersecting sets in $M$  and delete all non-absorbing sets.
   Then $H[\bigcup_{A\in\mathcal{M}_0}A]$ contains a perfect
    matching.

    Now we show that $\mathcal{M}_0$ is the desired matching. First, it follows from (i) that
    $\left|\mathcal{M}_0\right|\le 4\varepsilon n$. Next, by (ii), for each $T\in \binom{V(H)}{3}$ we have
    \[
    \left|\mathcal{L}(T)\cap \mathcal{M}_0\right|\ge \varepsilon\gamma^4n/4.
    \]
    Given $S\subseteq V(H)\setminus V(M)$ with $\left|S\right|\le \eta n$ and
    $|s|\equiv 0\pmod 3$, partition $S$ into at most $\eta n/3$ many 3-sets. Since
    $\eta n/3<\varepsilon\gamma^4n/4$, we can greedily choose an absorbing set for each 3-set.
    Hence, $H[S\cup V(M)]$ contains a perfect matching.
\end{proof}

\section{Proofs of  Theorems 1.1, 1.3 and 1.4}

\begin{proof}[Proof of Theorem~\ref{thm1.2}]
    Let
\[
R:=\{v\in V(H): d_H(v)>3s(n-2)\}
\]
and let $r:=|R|$. Write $R=\{v_1,\ldots,v_r\}$.

We first consider the case of $r\geq s + 1$. Let $R'\subseteq R$ be a subset of size $s+1$, and denote its elements as $R':=\{v_1,\ldots,v_{s + 1}\}$. For each $i\in [s]$, we define $R_i:=\{v_1,\ldots,v_i\}$ and $R_0=\emptyset$. We can observe the following degree-related inequalities. The degree of vertex $v_1$ in the hypergraph $H$ satisfies $d_{H}(v_1)>3s(n - 2)$. The degree of vertex $v_2$ in the subhypergraph $H - R_1$ is $d_{H - R_1}(v_2)>(3s - 1)(n - 2)$. In general, the degree of vertex $v_{s+1}$ in the subhypergraph $H - R_{s}$ is $d_{H-R_{s}}(v_{s+1})>2s(n - 2)>0$.
Now, we will use a greedy algorithm to find a matching of size $s + 1$ in the hypergraph $H$. Since $d_{H - R_{s}}(v_{s+1})>0$, there exists an edge $e_1$ in the subhypergraph $H - R_s$ such that $v_{s+1}\in e_1$. 
Suppose we have already found $i$ edges $e_1\in H - R_s$, $e_2\in H-(R_{s - 1}\cup e_1)$, $\cdots$, $e_i\in H-(R_{s+1 - i}\cup \bigcup_{j = 1}^{i - 1} e_j)$ with the property that $v_{s - j+2}\in e_j$ for each $j\in [i]$.
Since the degree of vertex $v_{s - i+1}$ in the subhypergraph $H - R_{s - i}$ satisfies $d_{H - R_{s - i}}(v_{s - i+1})>(2s + i)(n - 2)$, there must exists an edge $e_{i + 1}$ that contains the vertex $v_{s-i+1}$  and avoids
$R_{s-i}\cup (\bigcup_{j=1}^i e_j)$. Continuing this process, we obtain a matching of $\{e_1,\ldots,e_{s+1}\}$ in $H$. 
Thus, we consider  $r\le s $ in the following.

\medskip
\textbf{Claim 1.~}$\nu(H-R)\ge s-r+1$.
\medskip

Suppose to the contrary that $M$  is a maximum matching in $H-R$ with 
$|M|\le s-r$. Let $V(M)$ denote the set of vertices covered by $M$. Then $|V(M)|=3|M|\le 3(s-r)$. 
Recall that every vertex in $H-R$ has degree at most $3s(n-2)$
(by the definition of $R$). Since every edge of $H-R$ intersects $V(M)$, we have
\begin{align}\label{se}
    e(H-R)\le \sum_{x\in V(M)}d_{H-R}(x)
\le 3(s-r)\cdot 3s(n-2)
< 9(s-r)sn.
\end{align}
On the other hand, for each $v\in V(H)\setminus R$, we know $d_H(v)\le 3s(n-2)$
and
\[
\rho(N_H(v))
>\frac{1}{2}\Big(s-1+\sqrt{(s-1)^2+4s(n-s-1)}\Big).
\]
Note that $N_H(v)$ has $n-1$ vertices. Thus, by Lemma~\ref{edge-spec-bound}, we have
\begin{align*}
d_{H-R}(v)&=e(N_H(v)-R)\\
&>\frac{1}{2}(s-r)\big((n-1)-s\big)\\
&=\frac{1}{2}(s-r)(n-s-1)
\end{align*}
for every $v\in V(H)\setminus R$.
Summing this inequality over all vertices in $H-R$  and dividing by 3 (since each edge in the 3-uniform hypergraph is counted three times in the degree sum), we obtain
\begin{align*}
    e(H-R)>\frac{(n-r)(s-r)(n-s-1)}{6}>9(s-r)sn,
\end{align*}
due to  $0\le r\le s$
and $n\ge 100s$, which contradicts \eqref{se}. This completes the proof of Claim 1. 

By Claim~1, there exists a matching $M \subseteq H-R $ of size $ s-r+1 $. Since every vertex in $ R $ has degree greater than $ 3s(n-2) $, we can greedily extend $ M $ to a matching of size $ s+1 $ in $ H $, following the reasoning above. Thus, $ H $ contains a matching of size $ s+1 $, as required.
\end{proof}

We will make use of the concept of a fractional vertex cover. A \emph{fractional vertex cover} of $H$ is a function $\omega: V(H) \to [0,1]$ satisfying
\[
\sum_{v \in e} \omega(v) \geq 1 \quad \text{for every } e \in E(H).
\]
The minimum value of $\sum_{v \in V(H)} \omega(v)$ over all fractional vertex covers $\omega$ is denoted by $\tau^*(H)$. By linear programming duality, we have $\nu^*(H) = \tau^*(H)$.

\begin{proof}[Proof of Theorem~\ref{thm1.3}]
Let $V(H)=[n]$ and let
$\omega:V(H)\rightarrow [0,1]$ be a minimum fractional vertex cover of $H$.
Without loss of generality, we may assume that $\omega(1)\ge \omega(2)\ge \cdots\ge \omega(n)$.
Let
\[
E'=\Big\{e\in \binom{V(H)}{3}:\ \sum_{x\in e}\omega(x)\ge 1\Big\},
\]
and let $H'$ be the $3$-graph with vertex set $V(H)$ and edge set $E(H')=E'$.
Clearly $\omega$ is a fractional vertex cover of $H'$. Since $H$ is a
subhypergraph of $H'$, it follows that $\omega$ is also a \emph{minimum}
fractional vertex cover of $H'$. By linear programming duality, we have
\[
\nu^*(H)=\tau^*(H)=\tau^*(H')=\nu^*(H').
\]
Hence it suffices to show that $H'$ has a matching of size $s+1$.

\medskip
\textbf{Claim 2.~}Let $\{a_1,a_2,a_3\},\{b_1,b_2,b_3\}\in \binom{[n]}{3}$ be ordered such that
$a_1\le a_2\le a_3$ and $b_1\le b_2\le b_3$ with $a_i\leq b_i$ for all $1\leq i\leq 3$. If $\{b_1,b_2,b_3\}\in E(H')$, then
$\{a_1,a_2,a_3\}\in E(H')$.
\medskip

Since $\omega(1)\ge\cdots\ge \omega(n)$ and $a_i\le b_i$ for each $i\in[3]$,
we have $\omega(a_i)\ge \omega(b_i)$ and hence
\[
\sum_{i=1}^3\omega(a_i)\ge \sum_{i=1}^3\omega(b_i)\ge 1.
\]
By the definition of $H'$, we can deduce that $\{a_1,a_2,a_3\}\in E(H')$.
This proves Claim~2.

Since $H$ is a subgraph of $H'$, we have
$\rho(N_{H'}(v))\ge \rho(N_H(v))$ for each vertex $v$. In particular,
\[
\rho(N_{H'}(n))
\ge \rho(N_H(n))
>\frac{1}{2}\Big(s-1+\sqrt{(s-1)^2+4s(n-s-1)}\Big).
\]
Note that $N_{H'}(n)$ contains $n-1$ vertices. Then by Theorem~\ref{s_match} and $n\geq 3s+3$, we have $\nu(N_{H'}(n))\ge s+1$.
Let $M:=\{e_1,\ldots,e_{s+1}\}$ be a matching of size $s+1$ of $N_{H'}(n)$, and 
let $U\subseteq V(H)\setminus V(M)$ with $|U|=s+1$. Write $U:=\{u_1,\ldots,u_{s+1}\}$. 
Then by Claim 2, we deduce that
\[
\mathcal{M}:=\{e_i\cup\{u_i\}\ |\ i\in [s+1]\}
\]
is a matching of size $s+1$ in $H'$. 
This completes the proof of Theorem~\ref{thm1.3}.  
\end{proof}

\begin{proof}[Proof of Theorem~\ref{thm1.1}]
    Let $\gamma',\gamma''$ be two constants such that $1/n\ll\gamma''\ll \gamma'\ll \gamma$. By Lemma \ref{absorbing}, $H$ contains a matching $M$ with $|M|\leq \gamma'n$ such that for any $S\subseteq V(H)\backslash V(M)$ with $|S|\leq \gamma'' n$ and $|S|\equiv 0\pmod 3$, $H[V(M)\cup S]$ has a perfect matching. 
    
   Let $ H'= H - V(M) $, and let $n'=n-|V(M)|$. For each $ v \in V(H') $, the graph $N_{H'}(v)$ is obtained from $N_H(v)$ by deleting  at most $3\gamma'n$ vertices. By Theorem \ref{edgespec}, we get
  \begin{align}\label{newbound}
        \rho(N_{H'}(v))\ge \rho(N_H(v))-\sqrt{6\gamma'n^2}
        >(2/3+\gamma/2)n.
    \end{align}  

   \medskip
    \textbf{Claim 3.~}$H'$ contains  $r\ge n'/\ln n'$ edge-disjoint perfect fractional  matchings, say $f_1,\dots,f_r$, such that  for any $D\in \binom{V(H')}{2}$, $\sum_{i=1}^r\sum_{D\subset e}f_i(e)\leq 3$. 
    \medskip

    By Theorem~\ref{thm1.3}, Theorem~\ref{rainfracmat} and  (\ref{newbound}), $H'$ has a perfect fractional matching. By Theorem~\ref{rainfracmat}, $H'$ has a perfect fractional matching $f_1$ such that $|M_1|\leq n'$, where $M_1:=\{e\in E(H')\ :\ f_1(e)>0\}$. 
    Let $s \ge 1$ be the maximum integer such that there exist $s$ perfect fractional  matchings $f_1,\ldots, f_s$ such that for $i\in [s]$, $M_i:=\{e\in E(H')\ :\ f_i(e)>0\}$ with $|M_i|\leq n'$ and   for any $\{i,j\}\in {[s]\choose 2}$, $M_i\cap M_j=\emptyset$ and for any $D\in {V(H')\choose 2}$, $\sum_{i=1}^s \sum_{D\subseteq e}f_i(e)\leq 3$. 

    By contradiction, suppose $s < n'/\ln n'$.
Let $$U_{s}:=\Big\{D\in \binom{V(H')}{2}\ : \sum_{i=1}^{s}\sum_{D\subseteq e}f_i(e)>2\Big\}
$$ and 
\begin{align*}
E_s:=\{e\in E(H')\ : \exists D\in U_s\mbox{ such that }D\subseteq e\}.
\end{align*}
For $w\in V(H')$, since $\sum_{i=1}^s\sum_{w\in e}f_i(e)=s$, there are at  most $s$ vertices $w'\in V(H')$ for which
$\sum_{i=1}^s \sum_{\{w,w'\}\subseteq e}f_i(e)>2$.
Consequently, for any $v\in V(H')$, $d_{H'[E_s]}(v)\leq 2sn'$. Now, for any $v\in V(H')$,  we consider the following. Let $x$  be a unit eigenvector of $A(N_{H'}(v))$ corresponding to the spectral radius $\rho(N_{H'}(v))$. 
Then
\begin{align*}
\mathbf{x}^TA(N_{H'-E_s-(\cup_{i=1}^s M_i)}(v))\mathbf{x}&=\mathbf{x}^TA(N_{H'}(v))\mathbf{x}-\mathbf{x}^TA(N_{H'[E_s\cup (\cup_{i=1}^s M_i)]}(v))\mathbf{x}\\
&>(2/3+\gamma/2)n'-\sqrt{6sn'}\quad \mbox{(by (\ref{newbound}) and  Theorem \ref{edgespec})}\\
&>(2/3+\gamma/3)n'.
\end{align*}
This implies that $\rho(N_{H'-E_s-(\cup_{i=1}^sM_i)}(v))\ge (2/3+\gamma/3)n'$. Then by Theorem~\ref{thm1.3} and Theorem~\ref{rainfracmat}, $H'-E_s-(\cup_{i=1}^s M_i)$ has a perfect fractional  matching $f_{s+1}$ such that $|M_{s+1}|\leq n'$, where $M_{s+1}:=\{e\in E(H'-E_s-(\cup_{i=1}^sM_i)\ :\ f_{s+1}(e)>0\}$. We continue this process iteratively. Eventually, we can find the desired 
$r$ perfect fractional  matchings $f_1,\ldots, f_{r}$, where $r\geq  n'/\ln n'$, such that  for any $D\in \binom{V(H')}{2}$, $\sum_{i=1}^r\sum_{D\subset e}f_i(e)\leq 3$. This completes the proof of Claim 3.
 
Define a function $h:E(H')\to [0,1]$ by $h(e)=\sum_{i=1}^{ n'/\ln n'} f_i(e)$. Now let $R$ be the random subgraph of $H'$ where each edge $e\in E(H')$
 is selected independently with probability $h(e)$. This is well-defined, since the supports of the perfect fractional matchings are pairwise disjoint. We then have the following properties:
\begin{itemize}
\item [(A1)] for any vertex $v\in V(H')$, $\mathbb{E}[d_R(v)]=\sum_{v\in e}h(e)= n'/\ln n'$;

\item [(A2)] for any $D\in\binom{V(H')}{2}$, $\sum_{D\subset e}h(e)\le 3$.
\end{itemize}

By the Chernoff bound, there exists a spanning subgraph  $H''$ of $H'$ such that 
 with probability at least $1-o(1)$, the following hold: 
\begin{itemize}
\item[(B1)] for any vertex $v\in V(H'')$, $n'(1/\ln n'-1/\ln ^2n')\le d_{H''}(v)\le n'(1/\ln n'+1/\ln ^2n')$, and

\item[(B2)] for any $D\in\binom{V(H'')}{2}$, $d_{H''}(D)\le \sqrt{n'}$.
\end{itemize}

Thus by Theorem~\ref{nibble}, $H''$ contains  a matching $M_1$ satisfying 
\[
\bigl|V(H'')\setminus V(M_1)\bigr|\le \gamma'' n'\le \gamma'' n.
\]
Let $S=V(H'')\setminus V(M_1)$. By the choice of $M$ in Lemma~\ref{absorbing},
$H[V(M)\cup S]$ admits a perfect matching $M_2$. Therefore $M_1\cup M_2$ is a
perfect matching of $H$, which completes the proof. 
\end{proof}

\bibliographystyle{plain}  
\bibliography{ref1}        
\end{document}